\documentclass[12pt, reqno]{amsart}
\usepackage{amsmath, amsthm, amscd, amsfonts, amssymb, graphicx, color}
\usepackage[bookmarksnumbered, colorlinks, plainpages]{hyperref}

\newtheorem{theorem}{Theorem}[section]
\newtheorem{lemma}[theorem]{Lemma}
\newtheorem{proposition}[theorem]{Proposition}
\newtheorem{corollary}[theorem]{Corollary}
\theoremstyle{definition}

\newtheorem{example}[theorem]{Example}

\theoremstyle{remark}

\numberwithin{equation}{section}

\begin{document}

\title[Completeness properties]{Completeness properties of the space of quasicontinuous functions }

\author[\v Lubica Hol\'a]{\v Lubica Hol\'a}

\newcommand{\acr}{\newline\indent}

\address{\llap Academy of Sciences, Institute of Mathematics \acr \v
	Stef\'anikova 49,
	81473 Bratislava,
	\acr Slovakia}

\email{\textcolor[rgb]{0.00,0.00,0.84}{hola@mat.savba.sk}}



\subjclass[2010]{Primary 54C35, Secondary 54C60.}

\keywords{Quasicontinuous functions, topology of pointwise convergence, completely metrizable space, strong Choquet space}

\bigskip

\bigskip

\bigskip

\bigskip

\begin{abstract}
	Quasicontinuous functions have found applications in many areas of mathematics. We study completeness properties of the space of quasicontinuous functions equipped with the topology of pointwise convergence. Let $X$ be a Hausdorff topological space, $Q(X)$ be the space of quasicontinuous real-valued functions and $\tau_p$ be the topology of the pointwise convergence. For $(Q(X),\tau_p)$ complete metrizability, Polishness and \v Cech-completeness are equivalent. If $(Q(X),\tau_p)$ is completely  metrizable, then $X$ is countable and the set $I(X)$ of isolated points of $X$ is dense in $X$. If $X$ is first countable, then $(Q(X),\tau_p)$ is completely  metrizable if and only if $X$ is countable and $I(X)$ is dense in $X$.
\end{abstract}

\maketitle

\section{Introduction}

\bigskip
Quasicontinuity is a classical notion. Quasicontinuous functions were introduced by Kempisty in 1932 in \cite{Kem}. However  as far as we know the first mention of the condition of quasicontinuity can be found in the paper of R. Baire \cite{Ba} in the study of continuity points of separately continuous functions from $\Bbb R^2$ into $\Bbb R$. Quasicontinuous functions are very important in many areas of mathematics,  they have been intensively studied in the
literature \cite{Ho1, Ho2, Hol, HH9, HH10, Na, O}, see also the recent book Usco and quasicontinuous mappings \cite{HHM} and references therein.
Quasicontinuous functions are selections of minimal usco and minimal cusco maps \cite{HH1, HH3, HH6}, they found applications  in the study of topological groups \cite{Bou, Mo1, Mo3}, in the study of dynamical systems \cite{CFM}, etc.  The quasicontinuity is also used in the study of CHART groups \cite{Mo2}.

In this paper we study the topology of the pointwise convergence on the space of quasicontinuous functions, mainly completeness properties. We continue our research from \cite{HH6}. Let $X$ be a Hausdorff topological space, $Q(X)$ be the space of quasicontinuous real-valued functions and $\tau_p$ be the topology of the pointwise convergence. For $(Q(X),\tau_p)$ complete metrizability, Polishness and \v Cech-completeness are equivalent. If $(Q(X),\tau_p)$ is completely  metrizable, then $X$ is countable and the set $I(X)$ of isolated points of $X$ is dense in $X$. If $X$ is first countable, then $(Q(X),\tau_p)$ is completely  metrizable if and only if $X$ is countable and $I(X)$ is dense in $X$.

\section{Preliminaries}

\bigskip

In what follows let $X, Y$ be Hausdorff topological spaces, $\Bbb N$ be the set of positive integers, $\omega$ be the set of
non-negative integers  and $\Bbb{R}$ be the space of real numbers with the usual metric.  The symbol $\overline A$ will stand for the closure of the set $A$ in a topological space.

A function $f: X \to Y$ is \emph{quasicontinuous} \cite{Ne} at $x \in X$ if for every open set $V \subset Y, f(x) \in V$ and every open set $U\subset X$, $x \in U$ there is a nonempty open set $W \subset U$ such that $f(W) \subset V$. If $f$ is quasicontinuous at every point of $X$, we say that $f$ is quasicontinuous.
We say that a subset of $X$ is quasi-open (or semi-open) \cite{Ne} if it is contained in the closure of its interior. Then a function $f: X \to Y$ is quasicontinuous if and only if $f^{-1}(V)$ is quasi-open for every open set $V\subset Y$ \cite{Ne}.

Let $X$ be a topological space and $(Y,d)$ be a metric space. A function $f: X \to Y$ is \emph{cliquish} \cite{Th} at $x_0$ if for every $\varepsilon > 0$ and every open neighbourhood $U$ of $x_0$, there is a nonempty open subset $V$ of $U$ such that for all $x_1, x_2 \in V$, $d(f(x_1),f(x_2)) < \varepsilon$. The function $f$ is cliquish if it is cliquish at every point $x \in X$.

For a topological space $X$ denote by $\Bbb R^X$ the set of all functions from $X$ to $\Bbb R$, by $Q(X)$ the set of all quasicontinuous functions in $\Bbb R^X$, by $C(X)$ the space of all continuous functions in $\Bbb R^X$ and by $Cliq(X)$ the set of all cliquish functions in $\Bbb R^X$.

\bigskip

By $\mathfrak{F}$ we mean the family of all finite subsets of $X$. Denote by $\tau_p$  the topology of pointwise convergence on $\Bbb R^X$.  This topology is induced by the
uniformity $\frak U_p$  which has a base consisting of sets of the
form
$$W(A,\varepsilon )=\{(f ,g):\ \forall\ x\in A\ \ |f(x) - g(x)|<
\varepsilon \},$$

where $A \in\mathfrak{F}$ and $\varepsilon >0$. The general
$\tau_p$-basic neighborhood of $f\in \Bbb R^X$ will be denoted by
$W(f,A,\varepsilon )$, i.e. $W(f,A,\varepsilon
)=W(A,\varepsilon )[f]$.

The topology $\tau_p$ of the pointwise convergence on $\Bbb R^X$ is just the product topology on $\Bbb R^X$. If we write $\Bbb R^X$,
we will always mean that it is equipped with the product topology.

In what follows let $X$ be a Hausdorff nontrivial topological space, i.e., $X$ is at least countable.

In this paper, we are  mainly interested in completeness properties. A topological space $X$ is \v Cech-complete  \cite{En}, if $X$ is a Tychonoff space and it is a $G_\delta$ set in one (equivalently, in all) of its compactifications. Each completely metrizable space is Čech-complete.

A set $A\subseteq X$ is of \emph{first Baire category}, if it is a countable union of nowhere dense sets; otherwise it is of \emph{second Baire category}. A space $X$ is called \emph{Baire} space, if every nonempty open subset of $X$ is of second Baire category. There is also a characterization of Baire spaces via topological games.

The Choquet game $G(X)$ of a topological space $X$  \cite{Kech} is a game between two players $\alpha$ and $\beta$. The player $\beta$ starts a play by selection a nonempty open subset $U_0$ of $X$. Then $\alpha $ chooses a nonempty open subset $V_0$ of $U_0$. In  $n$th move, the player $\beta$  picks  a nonempty open set $U_{n}
\subset V_{n-1}$, where $V_{n-1}$ is the previous move of $\alpha$-player,  and $\alpha$ answers  by selecting a nonempty open set $V_n \subset U_n$.
The player $\alpha$ wins the play $(U_i, V_i)_{i \in \omega}$, if $\bigcap_{n \in \omega}  V_n  \neq \emptyset$. Otherwise  the player $\beta $ is said to have won the play. \\
We say that the player $\alpha$ has a winning strategy for the game $G(X)$ if there exists a strategy $s$, such that $\alpha $  wins all plays provided that he/she acts according to the strategy $s$. In this case, we say that $X$ is a \emph{Choquet space}.

The strong Choquet game $G^s(X)$ of a topological space $X$  \cite{Kech} is a game between two players $\alpha$ and $\beta$ similar to $G(X)$. The player $\beta$ starts a play by selection a pair $(x_0,U_0)$, where $U_0$ is an open subset of $X$ and $x_0 \in U_0$. Then $\alpha $ must play a nonempty open subset $V_0$ of $U_0$ with $x_0 \in U_0$.

In $n$th move, the player $\beta$  picks  a pair $(x_n,U_n)$, where $U_n$ is an open subset of $V_{n-1}$, the previous move of $\alpha$-player, and $x_n \in U_n$. Then $\alpha$ answers  by selecting an open set $V_n \subset U_n$ with $x_n \in V_n$.
The player $\alpha$ wins the play $(U_i, V_i)_{i \in \omega} $, if $\bigcap_{n \in \omega}  V_n  \neq \emptyset$. Otherwise  the player $\beta $ is said to have won the play. \\
We say that the player $\alpha$ has a winning strategy for the game $G^s(X)$ if there exists a strategy $s$, such that $\alpha $  wins all plays provided that he/she acts according to the strategy $s$. In this case, we say that $X$ is a \emph{strong Choquet space}.

In both games we can suppose that the players choose open sets from a fixed base of $X$.

\bigskip

\section{Quasicontinuous functions with the topology of pointwise convergence}

\bigskip

\bigskip

We extend Corollary 4.11 in \cite{HH6} using the notion of a $q$-space. It is a topological space such that for each point there is a sequence $\{U_n:\ n\in \omega\}$ of neighbourhoods of that point so that if $x_n\in U_n$ for each $n$ then $\{x_n:\ n\in \omega\}$ has a cluster point \cite{MN}.

\begin{proposition} \label{q} Let $X$ be a Hausdorff topological space. The following are equivalent:

(1) $(Q(X),\tau_p)$ is  metrizable;

(2) $(Q(X),\tau_p)$ has a countable base;

(3) $(Q(X),\tau_p)$ is  first countable;

(4) $(Q(X),\tau_p)$ is  a $q$-space;

(5) $X$ is countable.
\end{proposition}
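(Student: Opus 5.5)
The plan is to prove the cycle (5)$\Rightarrow$(2)$\Rightarrow$(1)$\Rightarrow$(3)$\Rightarrow$(4)$\Rightarrow$(5). Apart from the last one, these implications are standard.

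If $X$ is countable, then $\Bbb R^X$ is a countable product of second countable metrizable spaces. Hence it is metrizable with a countable base, and so is its subspace $Q(X)$. This gives (5)$\Rightarrow$(2) and (5)$\Rightarrow$(1). For (2)$\Rightarrow$(1) I would instead use Urysohn's metrization theorem, since $\Bbb R^X$ is Tychonoff and therefore so is $Q(X)$. Metrizable spaces are first countable, which gives (1)$\Rightarrow$(3). Every first countable space is a $q$-space: take a decreasing countable base $U_n$ at the point, so that $x_n\in U_n$ forces $x_n$ to converge to that point. This gives (3)$\Rightarrow$(4).

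The main step is (4)$\Rightarrow$(5). Suppose $(Q(X),\tau_p)$ is a $q$-space and look at the constant zero function $\mathbf 0\in Q(X)$. There is a sequence of neighbourhoods $\{U_n\}$ of $\mathbf 0$, and we may shrink each to a basic one, $U_n=W(\mathbf 0,A_n,\varepsilon_n)\cap Q(X)$ with $A_n$ finite. Put $A=\bigcup_n A_n$, which is countable, and assume for contradiction that $X$ is uncountable.

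The key construction is a sequence $f_n\in U_n$ with no cluster point. A cluster point of $\{f_n\}$ would have to be a real-valued function, so it suffices to find a point $x_0\in X$ with $|f_n(x_0)|\to\infty$, while each $f_n$ vanishes on $A_n$ and is quasicontinuous. Since $X$ is uncountable, there are points outside the countable set $A$, and the task is to pick $x_0\notin A$ together with quasicontinuous functions equal to $0$ on $A_n$ and to $n$ at $x_0$. The natural candidate is $f_n=n\cdot\chi_{\overline{G}}$ (or $n\chi_{G}$ when that is quasicontinuous), where $G$ is open, $x_0\in\overline G$, and $\overline G\cap A_n=\emptyset$. The characteristic function of the closure of an open set is quasicontinuous, because the preimages of open sets are $\emptyset$, $\overline G$, $X\setminus\overline G$ or $X$, all of which are quasi-open. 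Since $A_n$ is finite and $X$ is Hausdorff, for $x_0\notin A_n$ I can separate $x_0$ from the finitely many points of $A_n$ by an open neighbourhood $G$ of $x_0$ whose closure misses $A_n$: choose disjoint open sets around $x_0$ and around each $a\in A_n$, and intersect. Then $f_n(x_0)=n$ and $f_n|_{A_n}=0$, so $f_n\in U_n$. Any cluster point $g$ would satisfy $|g(x_0)-f_n(x_0)|<1$ for infinitely many $n$, which is impossible. Hence $A=X$, so $X$ is countable.

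The obstacle I expect is only in making sure a fixed $x_0\notin A$ works for all $n$ simultaneously. This is why countability of $A$ matters: I choose a single $x_0\in X\setminus A$ first and only then build each $G_n$ depending on $A_n$. On reflection, the argument even shows more, since it only needs $X\setminus A\neq\emptyset$. So the $q$-property at $\mathbf 0$ forces $X=A$, and $X$ is countable.
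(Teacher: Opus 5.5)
Your proof is correct, and your argument for (4)$\Rightarrow$(5) is the same as the paper's: fix a point outside $\bigcup_n A_n$ and use $n\cdot\chi_{\overline{G_n}}$ with $\overline{G_n}\cap A_n=\emptyset$ to get a sequence in the neighbourhoods $W(\mathbf 0,A_n,\varepsilon_n)$ with no cluster point. The only difference is that the paper cites Corollary 4.11 of \cite{HH6} for (1)$\Leftrightarrow$(2)$\Leftrightarrow$(3)$\Leftrightarrow$(5), whereas your cycle (5)$\Rightarrow$(2)$\Rightarrow$(1)$\Rightarrow$(3)$\Rightarrow$(4)$\Rightarrow$(5) uses only standard facts, so your proof is self-contained and the nontrivial direction (3)$\Rightarrow$(5) comes out of the (4)$\Rightarrow$(5) argument.
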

\begin{proof} It is sufficient to prove that $(4) \Rightarrow (5)$ since $1 \Leftrightarrow (2) \Leftrightarrow (3) \Leftrightarrow (5)$ are proved in Corollary 4.11 in \cite{HH6}. Suppose that $(Q(X),\tau_p)$ is a $q$-space. Let $f$ be the zero function on $X$. By the assumption there is a sequence $\{W(f,K_n,\varepsilon_n):\ n\in\omega\}$, $K_n \in \mathfrak{F}$, $\varepsilon > 0$,  such that if $f_n\in W(f,K_n,\varepsilon_n)$ for each $n\in\omega$, then $\{f_n:\ n\in\omega\}$ has a cluster point in $(Q(X),\tau_p)$. We claim  that $X=\bigcup\{K_n:n\in \omega\}$. Suppose that there is $x\in X\setminus\bigcup\{K_n:n\in\omega\}$.
For each $n\in\omega$, $K_n \in \mathfrak{F}$, thus there is an open set $U_n \subset X$ such that $\overline{U_n} \cap K_n = \emptyset$.
Let $f_n$ be such that $f_n(z) = n$ if $z \in \overline{U_n}$ and $f_n(z) = 0$ otherwise. Then $f_n$ is a quasicontinuous function and
 $f_n\in W(f,K_n,\varepsilon_n)$ for every $n\in\omega$, however $\{f_n: n \in \omega\}$ cannot have a cluster point in $(Q(X),\tau_p)$.
\end{proof}

\begin{corollary} Let $X$ be a Hausdorff topological space. The following are equivalent:

(1) $(Q(X),\tau_p)$ is  Polish;

(2) $(Q(X),\tau_p)$ is  completely metrizable;

(3) $(Q(X),\tau_p)$ is  \v Cech-complete.
\end{corollary}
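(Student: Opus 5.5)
The plan is to run the cycle $(1)\Rightarrow(2)\Rightarrow(3)\Rightarrow(1)$. Since Polish means separable and completely metrizable, $(1)\Rightarrow(2)$ holds by definition. For $(2)\Rightarrow(3)$ I will use the general fact recalled in the preliminaries: every completely metrizable space is \v Cech-complete. Note also that $(Q(X),\tau_p)$ is a subspace of the Tychonoff space $\Bbb R^X$, so it is Tychonoff, as the definition of \v Cech-completeness requires.

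The substantive step is $(3)\Rightarrow(1)$, which I will reduce to Proposition \ref{q}. Every \v Cech-complete space is a $q$-space. To see this, let $\beta Z$ be a compactification with $Z=\bigcap_n G_n$ and each $G_n$ open in $\beta Z$. For $z\in Z$, regularity of $\beta Z$ gives open sets $O_n\ni z$ in $\beta Z$ with $\overline{O_{n+1}}\subset O_n$ and $\overline{O_n}\subset G_n$. Put $U_n=O_n\cap Z$ and take $x_n\in U_n$. The sequence $(x_n)$ has a cluster point $p$ in the compact set $\bigcap_n\overline{O_n}$, and this set lies inside $\bigcap_n G_n=Z$. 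Hence $p\in Z$ is a cluster point of $(x_n)$ in $Z$. (Alternatively, I can cite \cite{En} for the fact that \v Cech-complete spaces are of point-countable type, hence $q$-spaces.)

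Applying this to $Z=(Q(X),\tau_p)$, Proposition \ref{q} $(4)\Rightarrow(5)$ shows that $X$ is countable. Then $(5)\Rightarrow(1),(2)$ of the same proposition makes $(Q(X),\tau_p)$ metrizable with a countable base, so it is separable metrizable. A metrizable \v Cech-complete space is completely metrizable, which is the classical theorem in \cite{En}. A separable completely metrizable space is Polish, which gives (1).

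The only step needing real care is checking that \v Cech-completeness yields the $q$-space property. The argument above handles it directly, so I expect no real obstacle.
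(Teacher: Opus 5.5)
Your proposal is correct and follows the paper's route. \v Cech-completeness gives the $q$-space property, Proposition \ref{q} then forces $X$ to be countable and hence $(Q(X),\tau_p)$ metrizable with a countable base, and metrizable plus \v Cech-complete yields complete metrizability. The only differences are cosmetic: you arrange the implications as a cycle rather than proving $(1)\Leftrightarrow(2)$ and $(3)\Rightarrow(2)$, and you prove directly that \v Cech-complete spaces are $q$-spaces, which the paper simply cites as known.
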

\begin{proof} Using Proposition \ref{q} $(1) \Leftrightarrow (2)$. It is sufficient to prove that $(3) \Rightarrow (2)$. It is known that \v Cech-complete space is a $q$-space. Thus by Proposition \ref{q} $(Q(X),\tau_p)$ is  \v Cech-complete and metrizable, thus it is completely metrizable.
\end{proof}

\bigskip

\begin{lemma} (\cite{HH6}) \label{dense} Let $X$ be a Hausdorff topological space. Let $x_1, x_2, ..., x_n$ be points in $X$ and $r_1, r_2, ..., r_n \in \Bbb R$. Then there exists a quasicontinuous function $f: X \to \Bbb R$ such that $f(x_i) = r_i$ for any $i \in \{1, 2, ..., n\}$.
\end{lemma}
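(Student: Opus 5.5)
We construct $f$ explicitly as a finite step function built from closures of open sets. The underlying observation is simple. If $G \subset X$ is open, then $\overline{G}$ is quasi-open, and so is $X\setminus \overline{G}$, since it is itself open. More generally, a function that is constant on each member of a finite partition of $X$ into quasi-open sets is quasicontinuous. Indeed, take $x$ in a piece $P$ and an open $U \ni x$. Because $P \subset \overline{\mathrm{int}\, P}$, the set $W = U\cap \mathrm{int}\, P$ is nonempty and open, and $f$ is constant on $W$ with value $f(x)$. So the preimage condition $f(W)\subset V$ holds for every open $V \ni f(x)$.

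First we may assume the points $x_1,\dots,x_n$ are distinct. If $x_i = x_j$ forces $r_i=r_j$, the statement is only meaningful under that assumption, and we simply discard repetitions. Since $X$ is Hausdorff, finite sets are closed. Using Hausdorffness we then choose pairwise disjoint open sets $G_1,\dots,G_n$ with $x_i\in G_i$; this is done by separating each pair and intersecting finitely many neighbourhoods.

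The natural attempt is to set $f = r_i$ on $\overline{G_i}$ and $f=0$ elsewhere. The obstacle is that the closures $\overline{G_i}$ need not be disjoint, although they can meet only on boundaries. We resolve this by an ordered assignment. Define $P_1=\overline{G_1}$, and for $k\ge 2$ let
$$P_k=\overline{G_k}\setminus (\overline{G_1}\cup\dots\cup\overline{G_{k-1}}).$$
Finally let $P_{n+1}=X\setminus \bigcup_{i\le n}\overline{G_i}$, on which $f=0$.

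It remains to check three things. First, $x_k\in P_k$: $x_k\in G_k$, and $G_k$ is an open set disjoint from $G_j$ for $j<k$, hence disjoint from $\overline{G_j}$. Second, each $P_k$ is quasi-open. Put $O_k = G_k\setminus(\overline{G_1}\cup\dots\cup\overline{G_{k-1}})$; since $G_k\cap\overline{G_j}=\emptyset$ for $j\neq k$, we have $O_k=G_k$. So $G_k\subset P_k\subset\overline{G_k}$, and sets squeezed between an open set and its closure are quasi-open. Third, $P_{n+1}$ is open. Thus the $P_k$ form a finite partition of $X$ into quasi-open sets. Setting $f\equiv r_k$ on $P_k$ and $f\equiv 0$ on $P_{n+1}$ gives a quasicontinuous function by the preliminary observation, and $f(x_i)=r_i$ for every $i$.
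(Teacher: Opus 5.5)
Your proof is correct and complete. The paper does not prove this lemma itself (it is quoted from \cite{HH6}), but your construction is the natural one: it uses the same device the paper uses in the proof of Proposition \ref{q}, namely step functions built on closures of open sets. Your ordered assignment $P_k=\overline{G_k}\setminus\bigcup_{j<k}\overline{G_j}$, together with the check $G_k\subset P_k\subset\overline{G_k}$, is exactly what is needed. It matters because in a space that is merely Hausdorff one cannot in general choose neighbourhoods of the $x_i$ with pairwise disjoint closures.
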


\begin{theorem} (\cite{HH6}) \label{dense1} Let $X$ be a Hausdorff topological space. $Q(X)$ is dense in $\Bbb R^X$.

\end{theorem}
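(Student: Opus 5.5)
The plan is to obtain Theorem \ref{dense1} directly from Lemma \ref{dense}, using the description of the product topology by basic open sets. Since $\tau_p$ on $\Bbb R^X$ is the product topology, it suffices to show that every nonempty basic open set meets $Q(X)$. So we fix an arbitrary $g \in \Bbb R^X$, a finite set $A \in \mathfrak{F}$ and $\varepsilon > 0$, and we produce a quasicontinuous function in $W(g,A,\varepsilon)$.

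Write $A = \{x_1, \dots, x_n\}$ with the points distinct, and put $r_i = g(x_i)$ for $i \in \{1,\dots,n\}$. Lemma \ref{dense} gives a quasicontinuous $f: X \to \Bbb R$ with $f(x_i) = r_i = g(x_i)$ for every $i$. Then $|f(x)-g(x)| = 0 < \varepsilon$ for all $x \in A$, so $(g,f) \in W(A,\varepsilon)$, that is, $f \in W(g,A,\varepsilon)\cap Q(X)$. The sets $W(g,A,\varepsilon)$ form a neighbourhood base at $g$ in $\tau_p$, so $g \in \overline{Q(X)}$. Since $g$ was arbitrary, $Q(X)$ is dense in $\Bbb R^X$. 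The case $A=\emptyset$ is trivial, because then $W(g,A,\varepsilon)=\Bbb R^X$ and contains, for example, the zero function.

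Everything substantive sits in Lemma \ref{dense}, which we assume. The only thing to be careful about is that the lemma is applied to distinct points, so the prescribed values $r_i$ are consistent; this holds because we enumerated $A$ without repetition. If one wanted a self-contained argument, the real obstacle would be the lemma itself. The natural route there would use the Hausdorff property to choose pairwise disjoint open sets $U_i \ni x_i$, let $f$ equal $r_i$ on $\overline{U_i}$, and handle overlaps among the closures. For isolated or non-isolated points one would then check quasicontinuity via quasi-open preimages, as in the construction of the functions $f_n$ in the proof of Proposition \ref{q}.
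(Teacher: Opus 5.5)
Your proof is correct: density in the product topology only requires meeting every basic neighbourhood $W(g,A,\varepsilon)$, and Lemma \ref{dense} supplies a quasicontinuous function that agrees with $g$ on the finite set $A$. The paper does not reprove this result but cites it from \cite{HH6}, stating Lemma \ref{dense} immediately before it, and your derivation matches that intended reduction.
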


\bigskip

\begin{theorem} Let $X$ be a Hausdorff topological space.  $(Q(X),\frak U_p)$ is complete if and only if $X$ is discrete.
\end{theorem}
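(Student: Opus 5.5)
The plan is to identify completeness of $(Q(X),\frak U_p)$ with $Q(X)$ being closed in $\Bbb R^X$, and then to compare $Q(X)$ with $\Bbb R^X$ directly. The uniformity $\frak U_p$ on $Q(X)$ is the restriction of the product uniformity of $\Bbb R^X$, and $\Bbb R^X$ is complete in it as a product of complete spaces. A subspace of a complete uniform space is complete if and only if it is closed. By Theorem \ref{dense1}, $Q(X)$ is dense in $\Bbb R^X$. Hence $(Q(X),\frak U_p)$ is complete if and only if $Q(X)=\Bbb R^X$, that is, if and only if every real-valued function on $X$ is quasicontinuous. It therefore suffices to prove that every function in $\Bbb R^X$ is quasicontinuous exactly when $X$ is discrete.

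If $X$ is discrete, every function on $X$ is continuous, hence quasicontinuous. So $Q(X)=\Bbb R^X$, and $Q(X)$ is complete.

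For the converse, suppose $X$ is not discrete, so some point $x_0$ is not isolated. The aim is a function that fails quasicontinuity at $x_0$. Let $f$ be the characteristic function of $\{x_0\}$, and take $V=(1/2,3/2)$ and $U=X$. Quasicontinuity at $x_0$ would give a nonempty open $W$ with $f(W)\subset V$, which forces $W\subset\{x_0\}$. Then $W=\{x_0\}$ would be open, so $x_0$ would be isolated, a contradiction. Thus $f\in\Bbb R^X\setminus Q(X)$, and since $Q(X)$ is dense in $\Bbb R^X$, it is not closed, hence not complete.

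To make the non-completeness explicit, I would write down a Cauchy net with no limit in $Q(X)$. For each $A\in\mathfrak F$, Lemma \ref{dense} gives $f_A\in Q(X)$ with $f_A=f$ on $A$. The net $(f_A)_{A\in\mathfrak F}$, directed by inclusion, is $\frak U_p$-Cauchy. It converges pointwise to $f$, and since limits in $\Bbb R^X$ are unique, it has no limit in $Q(X)$. No step here is hard. The only point needing care is the reduction of completeness to closedness, which relies on the completeness of $\Bbb R^X$ in the product uniformity, and I will state that explicitly.
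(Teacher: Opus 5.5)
Your proof is correct and follows the paper's proof. The paper also uses the $\mathfrak U_p$-Cauchy net $(f_A)_{A\in\mathfrak F}$ of quasicontinuous extensions from Lemma~\ref{dense} to show that completeness forces $Q(X)=\Bbb R^X$, and it uses the same non-quasicontinuous function ($0$ at a non-isolated $x_0$ and $1$ elsewhere, i.e.\ one minus your characteristic function); your reduction ``a subspace of the complete Hausdorff uniform space $\Bbb R^X$ is complete iff it is closed, and $Q(X)$ is dense'' is the abstract form of that net argument.
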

\begin{proof} If $X$ is discrete, then $Q(X) = \Bbb R^X$ and we are done, since $(\Bbb R^X,\frak U_p)$ is complete \cite{Ke}. Suppose now that $(Q(X),\frak U_p)$ is complete. We will prove that every $f \in \Bbb R^X$ is quasicontinuous. Let $f \in \Bbb R^X$. Let $A \in \mathfrak{F}$. Consider $f|A$. By Lemma \ref{dense} there is a quasicontinuous extension $f_A: X \to \Bbb R$ of $f|A$. Let $\mathfrak{F}$ be directed by the inclusion. The net $\{f_A: A \in \mathfrak{F}\}$ is Cauchy in $(Q(X),\frak U_p)$. Consider $W(B,\epsilon) \in \frak U_p$. Let $C, D $ be in $\mathfrak{F}$ such that  $B \subset C$ and $B \subset D$. Then $(f_C,f_D) \in W(B,\epsilon)$. Since $(Q(X),\frak U_p)$ is complete, the net $\{f_A: A \in \mathfrak{F}\}$ pointwise converges to a quasicontinuous function. Since the net $\{f_A: A \in \mathfrak{F}\}$  pointwise converges to $f$, $f$ has to be quasicontinuous.
Suppose there is $x_0 \in X$ which is not isolated. Then the function $g: X \to \Bbb R$ defined by $g(x_0) = 0$ and $g(x) = 1$ otherwise, is not quasicontinuous.

\end{proof}

\bigskip

In what follows denote $I(X)$ the set of all isolated points of $X$.

\bigskip



\begin{theorem} \label{cliq} Let $X$ be a  Hausdorff topological space.
 If $(Q(X),\tau_p)$ is completely metrizable, then $X$ is countable and $I(X)$ is dense in $X$.
\end{theorem}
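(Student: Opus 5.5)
Countability of $X$ follows at once from Proposition \ref{q}: a completely metrizable space is metrizable, so $X$ is countable. The real work is to show that $I(X)$ is dense in $X$. Since $(Q(X),\tau_p)$ is completely metrizable and dense in the Polish space $\Bbb R^X$ ($X$ is countable, and we use Theorem \ref{dense1}), it is a dense $G_\delta$ subset of $\Bbb R^X$. This uses the classical fact that a completely metrizable subspace of a metrizable space is $G_\delta$ in it.

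Suppose, towards a contradiction, that $I(X)$ is not dense. Then there is a nonempty open set $U\subset X$ with no isolated points of $X$ in it, and hence no isolated points of $U$ itself, since $U$ is open. I will show that $Q(X)$ is then contained in a meager subset of $\Bbb R^X$. That contradicts the Baire category theorem, because a dense $G_\delta$ in the Polish space $\Bbb R^X$ is comeager, hence not meager. Fix a point $x_0\in U$. Enumerate $X=\{y_k: k\in\omega\}$, and note that $U$ is infinite because it has no isolated points.

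For $n\in\Bbb N$, let $F_n$ be the set of $f\in\Bbb R^X$ with the following property: for every finite $A\subset X$ there is $y\in U\setminus A$ with $|f(y)-f(x_0)|\le 1/n$. Every quasicontinuous $f$ lies in $\bigcap_n F_n$. Indeed, take $V=(f(x_0)-1/n,f(x_0)+1/n)$. Since $U\setminus A$ is an open neighbourhood of $x_0$ when $x_0\notin A$ ($X$ is Hausdorff, so finite sets are closed), quasicontinuity gives a nonempty open $W\subset U\setminus A$ with $f(W)\subset V$. If $x_0\in A$, apply the same argument to $U\setminus(A\setminus\{x_0\})$; the resulting $W$ is open in a space without isolated points, so it contains a point other than $x_0$. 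So it suffices to show that the complement of some $F_n$, say of $F_1$, is comeager.

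Write $F_1=\bigcap_{m}E_m$, where
\[
E_m=\{f:\ \exists y\in U\setminus\{y_0,\dots,y_m\},\ |f(y)-f(x_0)|\le 1\}.
\]
These sets are decreasing in $m$. It is more convenient to exhibit a dense $G_\delta$ disjoint from $F_1$. Let
\[
G=\bigcup_m\{f:\ |f(y)-f(x_0)|>1 \text{ for all } y\in U\setminus\{y_0,\dots,y_m,x_0\}\}.
\]
This is not obviously $G_\delta$, since it is a condition on infinitely many coordinates, and this is the main obstacle. Instead I would argue by category directly: $F_1$ is a $G_\delta$ set, because each $E_m$ is a countable union over $y$ of closed conditions. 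More precisely, $E_m$ contains the open set $\bigcup_y\{f:|f(y)-f(x_0)|<1\}$ and is contained in its closure-type relaxation. So I would redefine $F_n$ with strict inequality $<1/n$, which keeps $Q(X)\subset F_n$ and makes $F_n$ a $G_\delta$. Then I will show that $F_1$ is not comeager and that its complement is dense, so that both $Q(X)$ and the complement would have to be comeager.

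The cleanest route is a direct Banach–Mazur argument on $\Bbb R^X$, building $f$ coordinate by coordinate. Given any basic open set determined by finitely many coordinates, extend it by first fixing $f(x_0)$ and then, at stage $k$, choosing $f(y)$ with $|f(y)-f(x_0)|\ge 2$ for the next new point $y\in U$. This is a winning strategy for the nonempty player (player $\beta$ in the paper's Choquet game $G(X)$) that drives every play into the complement of $\bigcap_n F_n$. By the Oxtoby characterization, a set into whose complement player $\beta$ can always force the play is meager. Hence $Q(X)\subset F_1$ is meager in $\Bbb R^X$, contradicting comeagerness. Making this game argument precise is the step I expect to need the most care.
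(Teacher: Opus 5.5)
Your opening steps are sound: countability via Proposition \ref{q}, $Q(X)$ being a dense $G_\delta$ (hence comeager) in the Polish space $\Bbb R^X$, and the inclusion $Q(X)\subset F_1$ are all correct. The gap is the central claim that the complement of $F_1$ is comeager. In fact $F_1$ itself is comeager. For each $m$, let $D_m=\{f\in\Bbb R^X:\ |f(y)-f(x_0)|<1 \text{ for some } y\in U\setminus\{y_0,\dots,y_m,x_0\}\}$. This set is open. It is also dense: a basic neighbourhood $W(g,K,\varepsilon)$ restricts only the finitely many coordinates in $K$. Since $U$ is infinite, you can pick $y\in U\setminus(K\cup\{y_0,\dots,y_m,x_0\})$ and change $g$ only at $y$, setting its value there to $g(x_0)$.

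So $\bigcap_m D_m\subset F_1$ is a dense $G_\delta$, and it is the complement of $F_1$ that is meager. You came close to noticing this when you remarked that $F_n$ is a $G_\delta$; a dense $G_\delta$ cannot be meager in $\Bbb R^X$. The Banach--Mazur strategy you sketch fails for the same reason. Each move fixes only finitely many coordinates, and on every turn the opponent can pin a fresh $y\in U$ to a value within $1$ of $f(x_0)$, so infinitely many such $y$ survive in the intersection of the play. Any condition that only asks for values near $f(x_0)$ at infinitely many points of $U$ is satisfied by the generic element of $\Bbb R^X$, so it cannot produce the contradiction.

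The missing idea is the translation trick used in the paper. For any $g\in\Bbb R^X$, the map $f\mapsto f+g$ is a homeomorphism of $\Bbb R^X$. So $Q(X)+g$ is also a dense $G_\delta$, and by the Baire category theorem $Q(X)\cap(Q(X)+g)\neq\emptyset$. Thus every $g$ is a difference $h-f$ of two quasicontinuous functions.

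Such a difference is cliquish. Given $x$, a neighbourhood $O$ and $\varepsilon>0$, take a nonempty open $W_1\subset O$ with $f(W_1)$ within $\varepsilon$ of $f(x)$. Pick $z\in W_1$ and take a nonempty open $W_2\subset W_1$ with $h(W_2)$ within $\varepsilon$ of $h(z)$. Then $h-f$ oscillates by less than $4\varepsilon$ on $W_2$.

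Now take $g(y_k)=k$. Every nonempty open subset of $U$ is infinite, so $g$ is unbounded on it and is not cliquish at any point of $U$, a contradiction. The contradiction thus comes from two comeager sets, $Q(X)$ and $Q(X)+g$, being disjoint, rather than from a meager superset of $Q(X)$.
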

\begin{proof} Suppose that  $(Q(X),\tau_p)$ is completely metrizable.  By Proposition \ref{q} the metrizability of $(Q(X),\tau_p)$ implies that $X$ is countable. We prove that every  function in $\Bbb R^X$ has to be cliquish. We use an idea from \cite{A}. Let $g \in \Bbb R^X$. Define the map $\psi: \Bbb R^X \to \Bbb R^X$ as follows: $\psi(f) = f + g$ for all $f \in \Bbb R^X$.
Clearly, $\psi$ is a homeomorphism of $\Bbb R^X$ onto itself. By Theorem \ref{dense1} $Q(X)$ is dense in $\Bbb R^X$  and $Q(X)$ is $G_\delta$ in $\Bbb R^X$. Hence the set $\psi(Q(X))$ is dense in $\Bbb R^X$ and $G_\delta$ in $\Bbb R^X$. Thus $Q(X) \cap \psi(Q(X))$ is the intersection of a countable family of open dense sets in $\Bbb R^X$. Since $\Bbb R^X$ is a Baire space, $Q(X) \cap \psi(Q(X)) \ne \emptyset$. Let $h \in Q(X) \cap \psi(Q(X))$.
Then $h = f + g$ for some $f \in Q(X)$ and $h \in Q(X)$. Thus $g = h - f$. It is easy to verify that $g$ must be cliquish. Suppose $X \ne \overline{I(X)}$. Put $L = X \setminus \overline{I(X)}$. Enumerate $X$ as $\{x_n: n \in \omega\}$ and define $f: X \to \Bbb R$ as $f(x_n) = n$. Since every nonempty open subset $U$ of $L$ must be infinite, the diameter $diam(f(U)) = \infty$, thus $f$ cannot be cliquish, a contradiction.

\end{proof}

\bigskip

\begin{proposition} Let $X$ be a countable discrete topological space. Then $(Q(X),\tau_p)$ is completely metrizable.
\end{proposition}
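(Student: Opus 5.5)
If $X$ is discrete, then every function $f\colon X\to\Bbb R$ is continuous, hence quasicontinuous. So the first step is to observe, as in the proof of the completeness theorem above, that $Q(X)=\Bbb R^X$. The topology $\tau_p$ on $Q(X)$ is then exactly the product topology on $\Bbb R^X$. The statement therefore reduces to showing that a countable power of $\Bbb R$ is completely metrizable.

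Since $X$ is countable, enumerate it as $\{x_n: n\in\omega\}$; if $X$ happens to be finite, the space is just $\Bbb R^m$ and the claim is immediate. On $\Bbb R^X$ define
$$d(f,g)=\sum_{n\in\omega}2^{-n}\min\{1,|f(x_n)-g(x_n)|\}.$$

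The plan has two verifications, both standard.

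\emph{First}, $d$ induces $\tau_p$. A basic neighbourhood $W(f,A,\varepsilon)$ with $A$ finite contains a $d$-ball around $f$, because for each point $x_n\in A$ we have $\min\{1,|f(x_n)-g(x_n)|\}\le 2^{n}d(f,g)$. Conversely, every $d$-ball around $f$ contains some $W(f,\{x_0,\dots,x_N\},\varepsilon)$: choose $N$ so that the tail $\sum_{n>N}2^{-n}$ is small, and then choose $\varepsilon$ small.

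\emph{Second}, $d$ is complete. Let $(f_k)$ be $d$-Cauchy. Then for each $n$ the sequence $(f_k(x_n))$ is Cauchy in $\Bbb R$, so it converges to some value $f(x_n)$. This defines $f\in\Bbb R^X$ with $f_k\to f$ pointwise. By the first verification, pointwise convergence is the same as $d$-convergence, so $f_k\to f$ in $d$.

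Alternatively, one can simply cite that a countable product of completely metrizable spaces is completely metrizable \cite{En}; the metric above is the explicit witness for that fact.

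There is no real obstacle here. The only point needing care is the identification $Q(X)=\Bbb R^X$ for discrete $X$. It is also worth noting that $\Bbb R^X$ is separable, so the space is in fact Polish, which is consistent with the preceding corollary.
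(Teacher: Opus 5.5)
Your proof is correct and follows the paper's argument: the paper also notes that $Q(X)=\Bbb R^X$ for discrete $X$ and that $\Bbb R^X$ with the product topology is completely metrizable when $X$ is countable. The only difference is that the paper simply states this last fact, whereas you write down an explicit complete metric inducing $\tau_p$.
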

\begin{proof} $Q(X) = \Bbb R^X$ and $\Bbb R^X$ equipped with the product topology is completely metrizable.

\end{proof}

\begin{theorem} \label{choquet} Let $X$ be a  first countable Hausdorff topological space with only one non isolated point.
Then  $(Q(X),\tau_p)$ is strong Choquet.
\end{theorem}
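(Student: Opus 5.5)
The plan is to use the structure of $X$ to make quasicontinuity a condition along a single sequence. Let $x_0$ be the unique non-isolated point and fix a decreasing countable local base $\{B_n:\ n\in\omega\}$ at $x_0$. First I will observe that every $f\in\Bbb R^X$ is quasicontinuous at each isolated point. Moreover, $f$ is quasicontinuous at $x_0$ if and only if there is a sequence $z_k$ of isolated points with $z_k\to x_0$ and $f(z_k)\to f(x_0)$. Two facts give this. Every nonempty open set contains an isolated point: an open set containing $x_0$ is a neighbourhood of a non-isolated point, so it contains points other than $x_0$, and these are isolated. And for a finite set $F$ with $x_0\notin F$, the set $B_n\setminus F$ is still an open neighbourhood of $x_0$, because $X$ is $T_1$. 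In particular, if $f$ is quasicontinuous then for every $n$, every finite $F$ and every $\eta>0$ there is an isolated point $z\in B_n\setminus F$ with $|f(z)-f(x_0)|<\eta$.

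Next I will describe the strategy for $\alpha$, working with basic sets. At stage $n$, $\beta$ plays $(f_n,U_n)$ with $U_n = W(f_n,A_n,\varepsilon_n)\cap Q(X)$. We may take this form because any open set containing $f_n$ contains such a basic set centred at $f_n$, so $\beta$'s move may be shrunk this way. Using quasicontinuity of $f_n$, $\alpha$ picks an isolated point $z_n\in B_n\setminus(\{z_0,\dots,z_{n-1}\}\cup A_0\cup\dots\cup A_n)$ with $|f_n(z_n)-f_n(x_0)|<1/(n+1)$. Put $A_n' = A_n\cup\{x_0,z_0,\dots,z_n\}\cup A_{n-1}'$. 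Then $\alpha$ chooses $\delta_n>0$ satisfying:
\begin{itemize}
\item $\delta_n<1/(n+1)$ and $\delta_n\le \delta_{n-1}/2$;
\item $\overline{W(f_n,A_n',\delta_n)}\subset W(f_n,A_n,\varepsilon_n)$, for which $\delta_n<\varepsilon_n/2$ suffices;
\item $\overline{W(f_n,A_n',\delta_n)}\subset W(f_{n-1},A_{n-1}',\delta_{n-1})$, which is possible because $f_n\in V_{n-1}$, an open set, so there is a positive margin $\delta_{n-1}-|f_n(a)-f_{n-1}(a)|$ at the finitely many $a\in A_{n-1}'$.
\end{itemize}
Here closures are taken in $\Bbb R^X$, where the closure of $W(f,A,\delta)$ is given by the non-strict inequalities. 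Then $\alpha$ plays $V_n = W(f_n,A_n',\delta_n)\cap Q(X)$, which contains $f_n$ and lies in $U_n$.

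To show $\alpha$ wins, let $D=\bigcup_n A_n'$, a countable set. For $a\in D$, the sequence $(f_n(a))$ is eventually Cauchy, since $|f_{n+1}(a)-f_n(a)|<\delta_n$ and $\sum\delta_n<\infty$. Define $g(a)=\lim f_n(a)$ on $D$ and $g=0$ off $D$. Then $|g(a)-f_n(a)|\le\sum_{m\ge n}\delta_m\le 2\delta_n$ for $a\in A_n'$. By the nested-closure condition, $g$ lies in $\overline{W(f_{n+1},A_{n+1}',\delta_{n+1})}\subset W(f_n,A_n',\delta_n)$ for every $n$. For quasicontinuity at $x_0$, estimate
$$|g(z_n)-g(x_0)|\le 2\delta_n+\frac1{n+1}+2\delta_n\to0 .$$
Since $z_n\in B_n$, we have $z_n\to x_0$, so $g\in Q(X)$ by the characterization. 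Hence $g\in\bigcap_n V_n\neq\emptyset$.

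I expect the main obstacle to be the bookkeeping that makes the limit land inside every $V_n$ rather than merely in its closure, while staying in $U_n$. This is handled by choosing each $\delta_{n+1}$ below the margin left by $f_{n+1}$ inside $V_n$.
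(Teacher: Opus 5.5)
Your proof is correct and takes essentially the paper's approach: at each move $\alpha$ adds $x_0$ and a witness point $z_n\in B_n$, with $|f_n(z_n)-f_n(x_0)|$ small, to the finite set and shrinks the radii geometrically; the pointwise limit on the countable set of used points, extended arbitrarily off it, is then quasicontinuous at $x_0$ via the witnesses. Your two extra precautions are genuinely needed and repair points the paper glosses over: the witnesses must be isolated (the paper only asks $x^{f_n}\in G_n(x_0)$, which $x_0$ itself satisfies), and with the paper's choice $\delta_n=\min\{\delta_{n-1}/2,\varepsilon_n\}$ the limit is only guaranteed to lie in the closures of the $V_n$, whereas your condition $\overline{W(f_n,A_n',\delta_n)}\subset V_{n-1}$ forces it into every $V_n$ (indeed, for an isolated point $a\in B_0$, $\beta$ can play $f_n=(1-2^{-n})\chi_{\{a\}}$, $\varepsilon_n=2^{-n}$, $B_n\supseteq A_{n-1}$, and then the $a$-coordinates of the paper's $V_n$ are $(1-2^{1-n},1)$, which have empty intersection).
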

\begin{proof} Let $x_0$ be a non isolated point in $X$ and $\{G_n(x_0): n \in \omega\}$ be a base of decreasing neighbourhoods of $x_0$. We will define a winning strategy $s$ for the player $\alpha$ in $G^s(Q(X))$ as follows: given the initial step $(f_0,U_0)$ of $\beta$, where $U_0=W(f_0,B_0,\varepsilon_0)$ for some finite set $B_0 \subset X$ and $\varepsilon_0>0$, put $\delta_0=\varepsilon_0$ and choose $x^{f_0} \in G_0(x_0)$ such that $|f_0(x_0) - f_0(x^{f_0})| < \delta_0$. Put $A_0 = B_0 \cup \{x_0, x^{f_0}\}$ and define $s((f_0,U_0)) = V_0 = W(f_0,A_0,\delta_0)$.

Inductively,  given $n\ge 1$,  if  for all $i\le n$, $(f_i,U_i)$ have been chosen by $\beta$, where $U_i=W(f_i,B_i,\varepsilon_i)$
for some finite set $B_i\subset X$, $\varepsilon_i>0$, let $\delta_n=\min\{\delta_{n-1}/2,\varepsilon_n\}$, choose a $x^{f_n} \in G_n(x_0)$ such that $|f_n(x_0) - f_n(x^{f_n})| < \delta_n$. Put $A_n = A_{n-1} \cup B_n \cup \{x^{f_n}\}$ and define $s(f_0,U_0), V_0, ..., V_{n-1}, (f_n,U_n)) = V_n = W(f_n,A_n,\delta_n)$.

We will prove that $s$ is the winning strategy for $\alpha$.

Consider a  run  $(f_0,U_0),V_0,\dots,(f_n,U_n),V_n,\dots$ of $Ch^s(Q(X))$ compatible
with $s$. Put $L=\bigcup_{n\in\omega} A_n = \bigcup_{n\in\omega} B_n$.
For every $x\in L$, there is $n_0\in\omega$  with $x\in A_{n_0}$, and for all $n>m\ge k\ge  n_0$ we have $f_n\in W(f_m,A_m,\delta_m)$, so
$|f_n(x) - f_m(x)|<\delta_{k}$. It follows that $(f_n(x))_{n\in\omega}$ is Cauchy, since $\delta_k\to 0$, so it converges to some $f(x) \in \Bbb R$, moreover, $|f(x) - f_n(x)|\le\delta_{k}<\delta_{n}$. It is easy to verify that $f \in W(f_n|L,A_n,\delta_n)$ for every $n \in \omega$.

Now we prove that $f$ is quasicontinuous at $x_0$.  Let $\varepsilon > 0$ and $G_n(x_0)$ be an open neighbourhood of $x_0$.
Let $m > n$ be such that $\delta_m < \varepsilon/5$. Then $|f_m(x_0) - f_m(x^{f_m})| < \delta_m$ and $x^{f_m} \in G_m(x_0) \subset G_n(x_0)$. We show that $|f(x_0) - f(x^{f_m})| < \varepsilon$. Let $k > m$ be such that

\centerline{$|f_k(x_0) - f(x_0)| < \delta_m$ and $|f_k(x^{f_m}) - f(x^{f_m})| < \delta_m$. Then}

\centerline{$|f(x_0) - f(x^{f_m})| \le |f(x_0) - f_k(x_0)| + |f_k(x_0) - f(x^{f_m})| \le $}

\centerline{$ |f(x_0) - f_k(x_0)|  + |f_k(x_0) - f_k(x^{f_m})| + |f_k(x^{f_m}) - f(x^{f_m})| $}

\centerline{$\le 2\delta_m +  |f_k(x_0) - f_k(x^{f_m})|$.}

\bigskip

 Since $\{x_0, x^{f_m}\} \subset  A_m$ and $f_k \in W(f_m,A_m,\delta_m)$,we have  $|f_k(x_0) - f_k(x^{f_m})| \le |f_k(x_0) - f_m(x_0)| + |f_m(x_0) - f_k(x^{f_m})| \le |f_k(x_0) - f_m(x_0)|  +
 |f_m(x_0) - f_m(x^{f_m})| + |f_m(x^{f_m}) - f_k(x^{f_m})| \le 3\delta_m$. Thus $|f(x_0) - f(x^{f_m})| < \varepsilon$. Define the function $g: X \to \Bbb R$ as $g(x) = f(x)$, if $x \in L$ and $g(x) = 1$ otherwise. Then $g$ is a quasicontinuous function and $g \in \bigcap_{n \in \omega} W(f_n,A_n,\delta_n)$. So $s$ is the winning strategy for $\alpha$ in $Ch^s(Q(X))$.

\end{proof}

The following example shows that the assumption of the first countability in Theorem \ref{choquet} is essential.

\begin{example} Let $\omega$ be equipped with the discrete topology and $\beta \omega$  be the \v Cech-Stone compactification of $\omega$. Choose $q \in \beta \omega \setminus \omega$. Let $X = \omega \cup \{q\}$ and $X$ have the topology inherited from $\beta \omega$.  We show that every quasicontinuous function $f: X \to \Bbb R$ is continuous. Of course, $f$ is continuous at every point from $\omega$. Suppose that $f$ is not continuous at $q$. There is $\epsilon > 0$ such that in every neighbourhood $O$ of $q$ there is $x \in O$ with $f(x) \notin (f(q) - \epsilon,f(q) + \epsilon)$. Put $C_q = \{x \in \omega: f(x) \notin (f(q) - \epsilon,f(q) + \epsilon)\}$ and $Q_q = \{x \in \omega: f(x) \in (f(q) - \epsilon,f(q) + \epsilon)\}$. The sets $C_q$ and $Q_q$ are pairwise disjoint infinite closed sets in $\omega$, thus their closures in $\beta \omega$ are also disjoint.
Since $q$ has to be in the closure of $Q_q$, it cannot be in the closure of $C_q$, a contradiction.
It was shown in \cite{LM} that $C(X)$ with the  topology of pointwise convergence is Baire but not Choquet. Thus $(Q(X),\tau_p)$ cannot be strong Choquet, since every strong Choquet space is Choquet \cite{Kech}.

\end{example}

\begin{theorem} \label{ja} Let $X$ be a  first countable Hausdorff topological space with $I(X)$ dense and with  countably many non isolated points.
Then  $(Q(X),\tau_p)$ is strong Choquet.
\end{theorem}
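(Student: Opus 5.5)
We generalize the strategy from the proof of Theorem \ref{choquet}. Enumerate the non-isolated points as $\{p_j: j\in\omega\}$; if there are finitely many, repeat them. For each $j$ fix a decreasing base $\{G_n(p_j): n\in\omega\}$ of neighbourhoods of $p_j$, which exists by first countability. The player $\alpha$ must ensure that the limit function is quasicontinuous at every $p_j$. At an isolated point quasicontinuity is automatic, and the limit will be defined on a countable set $L$ and extended suitably elsewhere.

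\textbf{Strategy.} At stage $n$, $\beta$ plays $(f_n, U_n)$ with $U_n = W(f_n,B_n,\varepsilon_n)$. Put $\delta_n=\min\{\delta_{n-1}/2,\varepsilon_n\}$. For each $j\le n$, use the quasicontinuity of $f_n$ at $p_j$ together with the density of $I(X)$ to pick an \emph{isolated} point $x^{f_n}_j\in G_n(p_j)$ with $|f_n(p_j)-f_n(x^{f_n}_j)|<\delta_n$. This is possible because the nonempty open set $W\subset G_n(p_j)$ given by quasicontinuity, on which $f_n$ is within $\delta_n$ of $f_n(p_j)$, contains an isolated point. Set $A_n=A_{n-1}\cup B_n\cup\{p_0,\dots,p_n\}\cup\{x^{f_n}_0,\dots,x^{f_n}_n\}$ and let $\alpha$ answer $V_n=W(f_n,A_n,\delta_n)$. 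Then $f_n\in V_n\subset U_n$.

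\textbf{Limit and quasicontinuity.} As in Theorem \ref{choquet}, for every $x\in L=\bigcup_n A_n$ the sequence $(f_n(x))$ is Cauchy. So it converges to $f(x)$ with $|f(x)-f_n(x)|\le\delta_n$ on $A_n$. The same triangle-inequality estimate as before yields $|f(p_j)-f(x^{f_m}_j)|\le 5\delta_m$ for all $m\ge j$. Since $x^{f_m}_j\in G_m(p_j)$ is isolated, the set $\{x^{f_m}_j\}$ is a nonempty open subset of any prescribed neighbourhood $G_n(p_j)$ once $m\ge n$. Hence any extension of $f|L$ is quasicontinuous at every $p_j$, because all non-isolated points lie in $L$ and their witnesses are isolated points of $L$. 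Define $g=f$ on $L$ and $g$ arbitrary (say $g=1$) on $X\setminus L$. All points of $X\setminus L$ are isolated, so $g$ is quasicontinuous at them trivially. Therefore $g\in Q(X)$ and $g\in\bigcap_n W(f_n,A_n,\delta_n)=\bigcap_n V_n$, so $\alpha$ wins.

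\textbf{Main obstacle.} The delicate point is quasicontinuity at $p_j$ when the other non-isolated points may be arbitrary in $X\setminus L$. In Theorem \ref{choquet} this was handled by having only one non-isolated point. Here it is resolved in two ways: every $p_j$ is put into $L$, and the witness points $x^{f_n}_j$ are chosen isolated, which uses the density of $I(X)$. With both in place, the values of $g$ off $L$ are irrelevant. A smaller point to check is that each $p_j$ is handled from stage $j$ onward with $\delta_m\to0$, which suffices since quasicontinuity only needs witnesses in arbitrarily small neighbourhoods.
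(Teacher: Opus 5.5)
Your strategy is the paper's: isolated witnesses $x^{f_n}_j\in G_n(p_j)\cap I(X)$, every non-isolated point placed in $L$, and a constant extension off $L$. The quasicontinuity part is correct.

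The gap is in the last step. You only obtain $|f(x)-f_n(x)|\le\delta_n$ on $A_n$, but $g\in W(f_n,A_n,\delta_n)$ needs strict inequality. With $\delta_n=\min\{\delta_{n-1}/2,\varepsilon_n\}$ this is not a presentational slip: the strategy actually loses. Fix $x\in A_0$. At each stage $n\ge 1$ let $\beta$ take $B_n=A_{n-1}$ and $\varepsilon_n=\delta_{n-1}/4$. By Lemma \ref{dense} he can also take a quasicontinuous $f_n$ with $f_n=f_{n-1}$ on $A_{n-1}\setminus\{x\}$ and $f_n(x)=f_{n-1}(x)+\delta_{n-1}-\varepsilon_n$.

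This is a legal move, since $U_n\subset V_{n-1}$. Your rule gives $\delta_n=\varepsilon_n$, so $f_n(x)=f_0(x)+\delta_0-\delta_n$. Hence every $h\in V_n$ has $h(x)\in(f_0(x)+\delta_0-2\delta_n,\,f_0(x)+\delta_0)$. These intervals have empty intersection, so $\bigcap_n V_n=\emptyset$. The paper's write-up has the same flaw: its assertion $|f(x)-f_n(x)|\le\delta_k<\delta_n$ is not justified, and the same play defeats its strategy.

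The repair is to keep a margin, for instance $\delta_n=\min\{\delta_{n-1}/2,\varepsilon_n/2\}$. Then for $x\in B_n\subset A_n$ you get $|g(x)-f_n(x)|\le\delta_n<\varepsilon_n$. So $g\in U_n$ for every $n$, and since $U_{n+1}\subset V_n$, this gives $g\in\bigcap_n V_n$. Nothing else in your argument changes. Your estimate at $p_j$ only uses the non-strict bounds, and in fact yields $|f(p_j)-f(x^{f_m}_j)|<3\delta_m$ for $m\ge j$, which is all quasicontinuity at $p_j$ requires.
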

\begin{proof} We will use an idea from Theorem \ref{choquet}. Enumerate as $\{x_n: n \in \omega\}$  non isolated points in $X$ and let $\{G_n(x_i): n \in \omega\}$ be a base of decreasing neighbourhoods of $x_i$ for every $i \in \omega$.  We will define a winning strategy $s$ for the player $\alpha$ in $G^s(Q(X))$ as follows: given the initial step $(f_0,U_0)$ of $\beta$, where $U_0=W(f_0,B_0,\varepsilon_0)$ for some finite set $B_0 \subset X$ and $\varepsilon_0>0$, put $\delta_0=\varepsilon_0$ and choose $x^{f_0} \in G_0(x_0)\cap I(X)$ such that $|f_0(x_0) - f_0(x^{f_0})| < \delta_0$. Put $A_0 = B_0 \cup \{x_0, x^{f_0}\}$ and define $s((f_0,U_0)) = V_0 = W(f_0,A_0,\delta_0)$.

Inductively,  given $n\ge 1$,  if  for all $i\le n$, $(f_i,U_i)$ have been chosen by $\beta$, where $U_i=W(f_i,B_i,\varepsilon_i)$
for some finite set $B_i\subset X$, $\varepsilon_i>0$, put $\delta_n=\min\{\delta_{n-1}/2,\varepsilon_n\}$. For every $i \le n$  choose  $x_i^{f_n} \in G_n(x_i)\cap I(X)$ such that $|f_n(x_i) - f_n(x_i^{f_n})| < \delta_n$. Put $A_n = A_{n-1} \cup B_n \cup \{x_n\} \cup \{x_i^{f_n}: i \le n\}$ and define $s(f_0,U_0), V_0, ..., V_{n-1}, (f_n,U_n)) = V_n = W(f_n,A_n,\delta_n)$.

We will prove that $s$ is the winning strategy for $\alpha$.

Consider a  run  $(f_0,U_0),V_0,\dots,(f_n,U_n),V_n,\dots$ of $Ch^s(Q(X))$ compatible
with $s$. Put $L=\bigcup_{n\in\omega} A_n = \bigcup_{n\in\omega} B_n$.
For every $x\in L$, there is $n_0\in\omega$  with $x\in A_{n_0}$, and for all $n>m\ge k\ge  n_0$ we have $f_n\in W(f_m,A_m,\delta_m)$, so
$|f_n(x) - f_m(x)|<\delta_{k}$. It follows that $(f_n(x))_{n\in\omega}$ is Cauchy, since $\delta_k\to 0$, so it converges to some $f(x) \in \Bbb R$, moreover, $|f(x) - f_n(x))\le\delta_{k}<\delta_{n}$. It is easy to verify that $f \in W(f_n|L,A_n,\delta_n)$ for every $n \in \omega$.

Now we prove that $f$ is quasicontinuous.  It is sufficient to prove that $f$ is quasicontinuous at every  $x_i$, $i \in \omega$, since other points are isolated. Let $i \in \omega$.  Let $\varepsilon > 0$ and $G_n(x_i)$ be an open neighbourhood of $x_i$.
Let $m > n$, $m > i$ be such that $\delta_m < \varepsilon/5$. Then $|f_m(x_i) - f_m(x_i^{f_m})| < \delta_m$ and $x_i^{f_m} \in G_m(x_i) \subset G_n(x_i)$. We show that $|f(x_i) - f(x_i^{f_m})| < \varepsilon$. Let $k > m$ be such that

\centerline{$|f_k(x_i) - f(x_i)| < \delta_m$ and $|f_k(x_i^{f_m}) - f(x_i^{f_m})| < \delta_m$. Then}

\centerline{$|f(x_i) - f(x_i^{f_m})| \le |f(x_i) - f_k(x_i)| + |f_k(x_i) - f(x_i^{f_m})| \le $}

\centerline{$ |f(x_i) - f_k(x_i)|  + |f_k(x_i) - f_k(x_i^{f_m})| + |f_k(x_i^{f_m}) - f(x_i^{f_m})| $}

\centerline{$\le 2\delta_m +  |f_k(x_i) - f_k(x_i^{f_m})|$.}

\bigskip

 Since $\{x_i, x_i^{f_m}\} \subset  A_m$ and $f_k \in W(f_m,A_m,\delta_m)$,we have  $|f_k(x_i) - f_k(x_i^{f_m})| \le |f_k(x_i) - f_m(x_i)| + |f_m(x_i) - f_k(x_i^{f_m})| \le |f_k(x_i) - f_m(x_i)|  +
 |f_m(x_i) - f_m(x_i^{f_m})| + |f_m(x_i^{f_m}) - f_k(x_i^{f_m})| \le 3\delta_m$. Thus $|f(x_i) - f(x_i^{f_m})| < \varepsilon$. Define the function $g: X \to \Bbb R$ as $g(x) = f(x)$, if $x \in L$ and $g(x) = 1$ otherwise. Then $g$ is quasicontinuous  and $g \in \bigcap_{n \in \omega} W(f_n,A_n,\delta_n)$. So $s$ is the winning strategy for $\alpha$ in $Ch^s(Q(X))$.

\end{proof}

\begin{proposition} \label{discrete} Let $X$ be a discrete topological space. Then $(Q(X),\tau_p)$ is strong Choquet.
\end{proposition}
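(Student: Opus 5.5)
If $X$ is discrete, then every function $f: X \to \Bbb R$ is quasicontinuous (indeed continuous), so $Q(X) = \Bbb R^X$ with the product topology. The plan is therefore to give a direct winning strategy for $\alpha$ in $G^s(\Bbb R^X)$, modelled on the proofs of Theorem \ref{choquet} and Theorem \ref{ja} but without the quasicontinuity step. No countability of $X$ is assumed, so we cannot simply quote complete metrizability. We work with basic open sets $W(f,B,\varepsilon)$, $B \in \mathfrak{F}$.

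\emph{Strategy.} Given $\beta$'s move $(f_n,U_n)$ with $U_n = W(f_n,B_n,\varepsilon_n)$, $\alpha$ sets $\delta_0 = \varepsilon_0$ and $\delta_n = \min\{\delta_{n-1}/2,\varepsilon_n\}$ for $n \ge 1$. She puts $A_n = A_{n-1}\cup B_n$ (with $A_{-1}=\emptyset$) and answers $V_n = W(f_n,A_n,\delta_n)$. This is a legitimate move: $V_n \subset U_n$ because $B_n \subset A_n$ and $\delta_n \le \varepsilon_n$, and $f_n \in V_n$. If $\beta$ plays a general open set, we first shrink it to a basic neighbourhood of $f_n$, as the paper allows.

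\emph{Winning.} Fix a play compatible with $s$ and put $L = \bigcup_n A_n$. For $x \in A_{n_0}$ and $n > m \ge n_0$ we have $f_n \in V_m$, hence $|f_n(x) - f_m(x)| < \delta_m$. Since $\delta_m \le 2^{-m}\delta_0$, the sequence $(f_n(x))$ is Cauchy and converges to some $f(x)$, with $|f(x) - f_m(x)| \le \delta_m$. The only delicate point is that membership in $V_m$ needs a strict inequality. To get it, compare with step $m+1$:
\[
|f(x) - f_m(x)| \le |f(x)-f_{m+1}(x)| + |f_{m+1}(x)-f_m(x)| \le \delta_{m+1} + |f_{m+1}(x)-f_m(x)|.
\]
This bound is strict but not obviously below $\delta_m$. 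The clean fix is to have $\alpha$ play the smaller set $V_n = W(f_n,A_n,\delta_n/2)$. Then $|f_n(x)-f_m(x)| < \delta_m/2$ for $n > m$, so in the limit $|f(x)-f_m(x)| \le \delta_m/2 < \delta_m$. Finally, extend $f$ to all of $X$ by setting $g = f$ on $L$ and $g = 0$ off $L$. Then $g \in \Bbb R^X = Q(X)$ and $g \in \bigcap_n W(f_n,A_n,\delta_n) \supset \bigcap_n V_n$, which is therefore nonempty, so $\alpha$ wins.

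The main obstacle is just this strictness bookkeeping; quasicontinuity of the limit is automatic because $X$ is discrete.
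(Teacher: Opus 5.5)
Your reduction $Q(X)=\Bbb R^X$ is the same as the paper's. From there the paper simply quotes that $\Bbb R^X$ with the product topology is strong Choquet, being a product of copies of the strong Choquet space $\Bbb R$. You instead build $\alpha$'s winning strategy by hand. That is a legitimate self-contained alternative: it effectively proves the product fact for arbitrary powers of $\Bbb R$, and it shows that only the countably many coordinates in $L$ ever matter. You are also right that complete metrizability cannot be quoted when $X$ is uncountable. Your strategy with the halved sets does win. However, the last step of your write-up does not prove that $\bigcap_n V_n\neq\emptyset$.

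You correctly noticed that with $V_n=W(f_n,A_n,\delta_n)$ the limit only satisfies a non-strict bound, and this is a real danger. At a single coordinate $y$, $\beta$ can play $f_n(y)=1-10^{-n}$ and $\varepsilon_n=10^{-n}$, keeping $f_n=0$ elsewhere and $B_n=A_{n-1}\cup\{y\}$; then $\bigcap_n V_n=\emptyset$. (The strategies written in Theorems \ref{choquet} and \ref{ja} are exposed to the same example.) But after switching to $V_n=W(f_n,A_n,\delta_n/2)$, your limit estimate is $|g(x)-f_m(x)|\le\delta_m/2$ on $A_m$. This is again non-strict and does not place $g$ in $V_m$. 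Your closing sentence, ``$g\in\bigcap_n W(f_n,A_n,\delta_n)\supset\bigcap_n V_n$, which is therefore nonempty'', deduces nonemptiness of the smaller set from nonemptiness of the larger one. As written, halving only pushes the strictness problem down one level.

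The missing line is to use the next move of the play. For $x\in B_{m+1}\subseteq A_{m+1}$ you have $|g(x)-f_{m+1}(x)|\le\delta_{m+1}/2<\delta_{m+1}\le\varepsilon_{m+1}$. Hence $g\in W(f_{m+1},B_{m+1},\varepsilon_{m+1})\subseteq U_{m+1}\subseteq V_m$ for every $m$. Equivalently, $W(f_{n+1},A_{n+1},\delta_{n+1})\subseteq U_{n+1}\subseteq V_n$, so $\bigcap_n W(f_n,A_n,\delta_n)=\bigcap_n V_n$ and your inclusion is in fact an equality. The real content of the halving is that the closure of $V_{n+1}$ in $\Bbb R^X$ lies inside $U_{n+1}$. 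This mirrors the complete-metric case, where $\alpha$ answers with balls whose closures sit inside $\beta$'s set. With that line added, your proof is complete.
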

\begin{proof} $Q(X) = \Bbb R^X$ and $\Bbb R^X$ equipped with the product topology is strong Choquet.

\end{proof}

\begin{theorem} \label{ja1} Let $X$ be a  first countable Hausdorff topological space.
$(Q(X),\tau_p)$ is completely metrizable if and only if $X$ is countable and $I(X)$ is dense in $X$.
\end{theorem}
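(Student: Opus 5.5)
One direction is already done: if $(Q(X),\tau_p)$ is completely metrizable, then Theorem \ref{cliq} shows that $X$ is countable and $I(X)$ is dense in $X$. First countability is not needed there.

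For the converse, assume $X$ is first countable and countable with $I(X)$ dense. We will combine three facts.
\begin{itemize}
\item By Proposition \ref{q}, $(Q(X),\tau_p)$ is metrizable, and in fact separable metrizable, since it has a countable base.
\item Since $X$ is countable, it has at most countably many non isolated points. If there are none, $X$ is discrete and the claim is the earlier proposition on countable discrete spaces. Otherwise Theorem \ref{ja} applies (the case of finitely many non isolated points is handled by the same strategy, repeating the enumeration), so $(Q(X),\tau_p)$ is strong Choquet.
\item By Choquet's theorem (see \cite{Kech}), a nonempty separable metrizable space is Polish if and only if it is strong Choquet.
\end{itemize}
Together these give that $(Q(X),\tau_p)$ is Polish, hence completely metrizable.

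The main point to check is that the characterization in \cite{Kech} applies. It is stated for second countable (separable metrizable) spaces, and this is exactly what Proposition \ref{q} supplies.

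A secondary detail is the bookkeeping in Theorem \ref{ja} when the non isolated points are finite in number, say $x_0,\dots,x_k$. In that case we set $x_n=x_k$ for $n\ge k$. The enumeration then remains a sequence and the proof goes through verbatim.

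Alternatively, one could avoid Choquet's theorem by exhibiting $Q(X)$ as a $G_\delta$ subset of $\Bbb R^X$. For each non isolated $x_i$, each basic neighbourhood $G_n(x_i)$ and each $k$, we would require some isolated $y\in G_n(x_i)$ with $|f(y)-f(x_i)|<1/k$. This is an open condition, because $y$ ranges over isolated points and the union over $y$ of open sets is open. Intersecting over the countably many triples $(i,n,k)$ gives exactly $Q(X)$, using density of $I(X)$ to ensure that quasicontinuity can be witnessed at isolated points, which are singletons and hence open. This gives a $G_\delta$ subset of the Polish space $\Bbb R^X$, which is therefore Polish. I would present this as the cleaner backup route if the game-theoretic citation is unwanted.
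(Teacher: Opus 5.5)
Your main argument is the paper's own proof. Necessity is Theorem~\ref{cliq}. Sufficiency combines the countable base from Proposition~\ref{q}, strong Choquetness from Theorem~\ref{ja}, and Choquet's characterization of Polish spaces. Your remarks on the discrete case and on finitely many non isolated points only make explicit what the paper leaves tacit.

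Your backup route is genuinely different, and it is correct. Take the $G_n(x_i)$ open. Quasicontinuity of $f$ at $x_i$ gives a nonempty open $W\subset G_n(x_i)$ with $f(W)\subset(f(x_i)-1/k,f(x_i)+1/k)$, and density of $I(X)$ puts an isolated $y$ in $W$. Conversely, the open singleton $\{y\}$ witnesses quasicontinuity, so your countably many open sets intersect exactly in $Q(X)$. One small correction: $\{f:|f(y)-f(x_i)|<1/k\}$ is open in $\Bbb R^X$ for every $y$. Isolatedness of $y$ is needed only for the inclusion of the intersection in $Q(X)$, not for openness.

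This route has several advantages.
\begin{itemize}
\item It replaces the game by Alexandrov's theorem on $G_\delta$ subsets of Polish spaces.
\item It handles all cases uniformly.
\item It dovetails with Theorem~\ref{cliq}: for countable $X$, complete metrizability of $(Q(X),\tau_p)$ is exactly the statement that $Q(X)$ is $G_\delta$ in $\Bbb R^X$.
\item It does not lean on the strategy of Theorem~\ref{ja}, whose write-up needs a small repair.
\end{itemize}

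The problem in Theorem~\ref{ja} is the choice $\delta_n=\min\{\delta_{n-1}/2,\varepsilon_n\}$, which allows $\delta_n=\varepsilon_n$. Player $\beta$ can exploit this as follows. Fix $z\in A_0$ and set $c=f_0(z)+\delta_0$. At each step play $B_n=A_{n-1}$, $\varepsilon_n=\delta_{n-1}/4$, $f_n(z)=c-\varepsilon_n$, and $f_n=f_{n-1}$ on $A_{n-1}\setminus\{z\}$. Then $\delta_n=\varepsilon_n$, the $z$-interval of each $V_n$ is $(c-2\delta_n,c)$, and so $\bigcap_n V_n=\emptyset$. Taking $\delta_n\le\varepsilon_n/2$ fixes this, since the limit function then lies in every $U_n$, hence in every $V_n$.

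In exchange, the paper's game route yields more. Theorem~\ref{ja} gives strong Choquetness also for uncountable $X$, where $Q(X)$ is not metrizable. This is what the final corollary on topological sums uses.
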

\begin{proof} By Theorem \ref{cliq} if $X$ is a  Hausdorff topological space and
$(Q(X),\tau_p)$ is completely metrizable,  $X$ is countable and $I(X)$ is dense in $X$.
Suppose now that $X$ is a  first countable Hausdorff topological space, which is countable and $I(X)$ is dense in $X$. Since $X$ is countable, by Proposition \ref{q} $(Q(X),\tau_p)$ is metrizable.
To prove that it is also completely metrizable, it is sufficient to prove that $(Q(X),\tau_p)$ is strong Choquet \cite[Theorem 8.7]{Cho}. By Theorem \ref{ja} $(Q(X),\tau_p)$ is strong Choquet.
\end{proof}

\bigskip

In the proof of the next result we will use the notion of a resolvable topological space. A topological space is resolvable if it  can be expressed as the union of two disjoint dense subsets.  This notion was introduced by E. Hewitt in 1943 \cite{He}.

\begin{theorem} Let $X$ be a Hausdorff $k$-space. If $(Cliq(X),\frak U_p)$ is complete, then $I(X)$ is dense in $X$.
\end{theorem}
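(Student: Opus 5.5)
Following the proof of the completeness theorem for $(Q(X),\frak U_p)$, I will first show that completeness of $(Cliq(X),\frak U_p)$ forces every function in $\Bbb R^X$ to be cliquish. Let $g\in\Bbb R^X$. For $A\in\mathfrak F$, Lemma \ref{dense} gives a quasicontinuous (hence cliquish) $g_A$ with $g_A|A=g|A$. Directed by inclusion, the net $\{g_A: A\in\mathfrak F\}$ is $\frak U_p$-Cauchy, exactly as in that proof. By completeness it converges in $\tau_p$ to some cliquish function. Since it converges pointwise to $g$ and $\tau_p$ is Hausdorff, $g$ is cliquish. So $Cliq(X)=\Bbb R^X$.

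Next I argue by contradiction. Suppose $I(X)$ is not dense, and put $L=X\setminus\overline{I(X)}$. Then $L$ is a nonempty open set with no isolated points, i.e.\ a nonempty open dense-in-itself subspace. The goal is a $g\in\Bbb R^X$ that is not cliquish at points of $L$. The idea is to split $L=D_1\cup D_2$ into two disjoint sets that are both dense in $L$, and to let $g$ be the characteristic function of $D_1$, extended by $0$ off $L$. Every nonempty open $V\subset L$ meets both $D_1$ and $D_2$, so $g$ oscillates by $1$ on $V$. Hence $g$ is not cliquish at any point of $L$, which is a contradiction.

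The main obstacle is resolvability, and this is exactly where the $k$-space hypothesis is used. I would invoke the classical result (Hewitt, or the later theorem that crowded Hausdorff $k$-spaces are resolvable; this is presumably why the paper has just introduced resolvability) that every dense-in-itself Hausdorff $k$-space is resolvable. To apply it I need $L$ itself to be a $k$-space and crowded. Crowdedness holds because $L$ is open and contains no isolated points of $X$. The $k$-space property passes to $L$ because open subspaces of Hausdorff $k$-spaces are $k$-spaces; one could also cite that closed subspaces are $k$-spaces and use $\overline{L}$ instead. If the citation only covers regular or compact cases, I would fall back on building the partition locally. Each point of $L$ has a compact set witnessing its non-isolatedness in a $k$-space. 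A compact crowded Hausdorff set is resolvable, by Hewitt's result for compact spaces, and a Zorn's-lemma maximal disjoint family of open pieces would glue these local partitions into a global one.
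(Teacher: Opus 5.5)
Your proposal is correct and follows the paper's proof essentially step for step. First, the $\frak U_p$-Cauchy net of quasicontinuous extensions $g_A$ forces $Cliq(X)=\Bbb R^X$. Then the open, dense-in-itself subspace $L=X\setminus\overline{I(X)}$ is a $k$-space, hence resolvable by the cited theorem on crowded $k$-spaces, and the characteristic function of one of the two dense pieces is not cliquish.

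Your Zorn's-lemma fallback is not needed. As sketched it would not work directly anyway, since a compact set witnessing non-isolatedness, such as a convergent sequence with its limit, need not be crowded.
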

\begin{proof} Suppose that $(Cliq(X),\frak U_p)$ is complete. We will prove that every $f \in \Bbb R^X$ is cliquish. Let $f \in \Bbb R^X$. Let $A \in \mathfrak{F}$. Consider $f|A$. By Lemma \ref{dense} there is a quasicontinuous extension $f_A: X \to \Bbb R$ of $f|A$. Thus $f_A$ is cliquish for every $A \in \mathfrak{F}$. Let $\mathfrak{F}$ be directed by the inclusion. The net $\{f_A: A \in \mathfrak{F}\}$ is Cauchy in $(Q(X),\frak U_p)$. Consider $W(B,\epsilon) \in \frak U_p$. Let $C, D $ be in $\mathfrak{F}$ such that  $B \subset C$ and $B \subset D$. Then $(f_C,f_D) \in W(B,\epsilon)$. Since $(Cliq(X),\frak U_p)$ is complete, the net $\{f_A: A \in \mathfrak{F}\}$ pointwise converges to a cliquish function. Since the net $\{f_A: A \in \mathfrak{F}\}$  pointwise converges to $f$, $f$ has to be cliquish. We claim that $I(X)$ has to be dense in $X$.
Suppose $X \ne \overline{I(X)}$. Put $L = X \setminus \overline{I(X)}$. Since $L$ is open, $L$ is also a $k$-space. It is easy to verify that $L$ is dense-in-itself. Every dense-in-itself $k$-space is resolvable \cite{Ve}. Let $U_1, U_2$ be two disjoint dense subsets of $L$ such that $L = U_1 \cup U_2$. Define the function $g$ as follows: $g(x) = 1$ if $x \in U_1$ and $g(x) = 0$ otherwise. Then $g$ cannot be cliquish, since diameter $diam(g(V)) = 1$ for every nonempty open subset $V$ of $L$.

\end{proof}

\begin{proposition} Let $X$ be a Hausdorff topological space.  If $I(X)$ is dense in $X$, then $(Cliq(X),\frak U_p)$ is complete.
\end{proposition}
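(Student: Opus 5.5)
Since the uniformity $\frak U_p$ on $\Bbb R^X$ is complete (\cite{Ke}), and a subspace of a complete uniform space is complete exactly when it is closed, I will prove that $Cliq(X)$ is $\tau_p$-closed in $\Bbb R^X$. The strongest form is $Cliq(X)=\Bbb R^X$ when $I(X)$ is dense, and that is what I will establish. Then every Cauchy net in $(Cliq(X),\frak U_p)$ converges pointwise in $\Bbb R^X$ to some function, and that limit is automatically cliquish.

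\textbf{Key step: every $f\in\Bbb R^X$ is cliquish.} Fix $f$, a point $x_0\in X$, $\varepsilon>0$ and an open neighbourhood $U$ of $x_0$. Because $I(X)$ is dense in $X$, the set $U$ contains an isolated point $y$. The singleton $V=\{y\}$ is then a nonempty open subset of $U$. For all $x_1,x_2\in V$ we have $x_1=x_2=y$, so
\[
|f(x_1)-f(x_2)|=0<\varepsilon .
\]
Hence $f$ is cliquish at $x_0$. As $x_0$ was arbitrary, $f$ is cliquish.

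\textbf{Concluding completeness.} Let $\{f_\sigma\}$ be a Cauchy net in $(Cliq(X),\frak U_p)$. For each $x\in X$ the net $\{f_\sigma(x)\}$ is Cauchy in $\Bbb R$, so it converges to some real number $f(x)$. The resulting function $f$ is the $\tau_p$-limit of the net, and by the key step $f\in Cliq(X)$. Therefore the net converges in $Cliq(X)$, which proves completeness.

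I expect no serious obstacle. The only point to check is that cliquishness, with its "for all $x_1,x_2\in V$" condition, is trivially satisfied on a singleton open set. This reflects that the hypothesis makes $Cliq(X)$ the whole space $\Bbb R^X$.
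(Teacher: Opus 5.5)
Your proposal is correct and follows the paper's approach: the paper also observes that density of $I(X)$ makes every function cliquish, so $Cliq(X)=\Bbb R^X$, and then uses completeness of $(\Bbb R^X,\frak U_p)$ from \cite{Ke}. You additionally write out the singleton-open-set argument for cliquishness and the pointwise limit of a Cauchy net, which the paper leaves implicit.
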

\begin{proof} If $I(X)$ is dense in $X$, then every function from $X$ to $\Bbb R$ is cliquish. Thus $Cliq(X) = \Bbb R^X$ and $(\Bbb R^X,\frak U_p)$ is complete \cite{Ke}.

\end{proof}

\begin{corollary} Let $X$ be a Hausdorff $k$-space. Then $(Cliq(X),\frak U_p)$ is complete, if and only if $I(X)$ is dense in $X$.
\end{corollary}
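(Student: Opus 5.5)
The corollary follows by combining the two preceding results, so the plan is simply to check each direction against the matching statement. For the forward implication, assume $X$ is a Hausdorff $k$-space and $(Cliq(X),\frak U_p)$ is complete. The preceding theorem applies verbatim and gives that $I(X)$ is dense in $X$.

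For the converse, assume $I(X)$ is dense in $X$. The preceding proposition needs only that $X$ is Hausdorff, so the $k$-space hypothesis is not used here. It shows that every real function on $X$ is cliquish. The reason is that any nonempty open $U$ meets $I(X)$ in some point $y$, and $\{y\}$ is an open subset of $U$ on which $f$ has diameter $0$. Hence $Cliq(X)=\Bbb R^X$, and $(\Bbb R^X,\frak U_p)$ is complete. Therefore $(Cliq(X),\frak U_p)$ is complete.

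There is no real obstacle at the level of the corollary itself. All the substance lies in the forward direction of the preceding theorem, namely the resolvability of the dense-in-itself open $k$-subspace $L=X\setminus\overline{I(X)}$, which the paper takes from \cite{Ve}. If one wanted to double-check anything, it would be two facts used there. First, $L$ is open, and an open subspace of a Hausdorff $k$-space is again a $k$-space. Second, $L$ has no isolated points: an isolated point of $L$ would be isolated in $X$, since $L$ is open, and yet $L$ misses $I(X)$, so this is impossible.
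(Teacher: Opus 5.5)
Your proposal is correct and matches the paper. The paper states this corollary without proof, as the conjunction of the preceding theorem (the forward direction, where the $k$-space hypothesis enters through resolvability of $L=X\setminus\overline{I(X)}$) and the preceding proposition (the converse, valid for any Hausdorff $X$ since $Cliq(X)=\Bbb R^X$), which is exactly how you argue it.
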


\begin{theorem} Let $X$ be a Hausdorff topological space. $(Cliq(X),\tau_p)$ is completely metrizable if and only if $X$ is countable and $I(X)$ is dense.
\end{theorem}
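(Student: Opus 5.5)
For the "if" direction, assume $X$ is countable and $I(X)$ is dense. Then every $f\in\Bbb R^X$ is cliquish: given $x$, $\varepsilon>0$ and an open neighbourhood $U$ of $x$, pick an isolated point $y\in U$. The set $V=\{y\}$ is a nonempty open subset of $U$ on which $f$ has oscillation $0$. Hence $Cliq(X)=\Bbb R^X$. Since $X$ is countable, $\Bbb R^X$ with the product topology is a countable product of copies of $\Bbb R$, so $(Cliq(X),\tau_p)$ is completely metrizable (indeed Polish).

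For the "only if" direction, assume $(Cliq(X),\tau_p)$ is completely metrizable, and first show that $X$ is countable. I would repeat the $q$-space argument of Proposition \ref{q} with $Cliq(X)$ in place of $Q(X)$. Complete metrizability gives first countability, hence a countable local base $\{W(0,K_n,\varepsilon_n)\}$ at the zero function. If some $x\notin\bigcup_n K_n$, choose open $U_n$ with $\overline{U_n}\cap K_n=\emptyset$ and $x\in U_n$ (possible by Hausdorffness since $K_n$ is finite). Define $f_n=n$ on $\overline{U_n}$ and $0$ elsewhere. Then $f_n$ is quasicontinuous, hence cliquish, and $f_n\in W(0,K_n,\varepsilon_n)$. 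In a first countable space, however, membership in a shrinking local base forces $f_n\to 0$, whereas $f_n(x)=n$. This contradiction shows $X=\bigcup_n K_n$ is countable.

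For density of $I(X)$, I would adapt the Baire-category argument of Theorem \ref{cliq}. A completely metrizable subspace of the metrizable space $\Bbb R^X$ is $G_\delta$ in it, and $Cliq(X)\supseteq Q(X)$ is dense in $\Bbb R^X$ by Theorem \ref{dense1}. Given $g\in\Bbb R^X$, the translate $Cliq(X)+g$ is also a dense $G_\delta$, so by Baireness of $\Bbb R^X$ it meets $Cliq(X)$. This yields $g=h-f$ with $h,f$ cliquish.

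The key step, and the main obstacle, is to show that a difference of two cliquish functions is cliquish. I would argue directly. Given $U$ and $\varepsilon$, cliquishness of $h$ gives a nonempty open $V\subset U$ on which $h$ oscillates less than $\varepsilon/2$. Cliquishness of $f$ at a point of $V$ then gives a nonempty open $W\subset V$ on which $f$ oscillates less than $\varepsilon/2$. On $W$ the oscillation of $h-f$ is less than $\varepsilon$. Thus every $g\in\Bbb R^X$ is cliquish. Finally, if $L=X\setminus\overline{I(X)}\neq\emptyset$, enumerate $X=\{x_n\}$ and set $f(x_n)=n$. Every nonempty open subset of $L$ is infinite, since $L$ is open and has no isolated points, so $f$ is unbounded on it. Hence $f$ is not cliquish, a contradiction.
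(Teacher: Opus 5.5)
Your proof is correct and follows essentially the same route as the paper. For the ``if'' direction you observe $Cliq(X)=\Bbb R^X$. For the converse you first get countability of $X$, then use the translation and Baire-category argument to show every $g\in\Bbb R^X$ is a difference of cliquish functions, hence cliquish, which fails for $f(x_n)=n$ when $X\neq\overline{I(X)}$.

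The differences are cosmetic. You rerun the argument of Proposition \ref{q} directly inside $Cliq(X)$, rightly requiring $x\in U_n$ and a nested local base, where the paper passes to the subspace $Q(X)$. You also spell out the paper's ``easy to verify'' step that a difference of cliquish functions is cliquish.
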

\begin{proof} If $X$ is countable, then $\Bbb R^X$ is completely metrizable. If $I(X)$ is dense in $X$, then every function from $X$ to $\Bbb R$ is cliquish. Thus $(Cliq(X),\tau_p) = \Bbb R^X$ is completely metrizable.

Suppose that $(Cliq(X),\tau_p)$ is completely metrizable. The metrizability of $(Cliq(X),\tau_p)$ implies that $(Q(X),\tau_p)$ is metrizable and thus by Proposition \ref{q} $X$ is countable. We prove that every  function in $\Bbb R^X$ has to be cliquish. We use a similar idea as in Theorem \ref{cliq}. Let $g \in \Bbb R^X$. Define the map $\psi: \Bbb R^X \to \Bbb R^X$ as follows: $\psi(f) = f + g$ for all $f \in \Bbb R^X$.
Clearly, $\psi$ is a homeomorphism of $\Bbb R^X$ onto itself. By Theorem \ref{dense1} $Q(X)$ is dense in $\Bbb R^X$. Thus also $Cliq(X)$ is dense in $\Bbb R^X$, since $Q(X) \subset Cliq(X)$. $Cliq(X)$ is $G_\delta$ in $\Bbb R^X$, since $(Cliq(X),\tau_p)$ is completely metrizable. Hence the set $\psi(Cliq(X))$ is dense in $\Bbb R^X$ and $G_\delta$ in $\Bbb R^X$. Thus $Cliq(X) \cap \psi(Cliq(X))$ is the intersection of a countable family of open dense sets in $\Bbb R^X$. Since $\Bbb R^X$ is a Baire space, $Cliq(X) \cap \psi(Cliq(X)) \ne \emptyset$. Let $h \in Q(X) \cap \psi(Q(X))$.
Then $h = f + g$ for some $f \in Cliq(X)$ and $h \in CliqX)$. Thus $g = h - f$. It is easy to verify that $g$ must be cliquish. Suppose $X \ne \overline{I(X)}$. Put $L = X \setminus \overline{I(X)}$. Enumerate $X$ as $\{x_n: n \in \omega\}$ and define $f: X \to \Bbb R$ as $f(x_n) = n$. For every nonempty open subset $U$ of $L$, the diameter $diam(f(U)) = \infty$, thus $f$ cannot be cliquish, a contradiction.

\end{proof}

\begin{corollary} Let $X$ be a  Hausdorff topological space.
$(Cliq(X),\tau_p)$  is Polish if and only if $(Cliq(X),\tau_p)$  is completely metrizable.
\end{corollary}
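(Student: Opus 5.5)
One direction is trivial: a Polish space is completely metrizable by definition. So the plan is to prove that if $(Cliq(X),\tau_p)$ is completely metrizable, then it is also separable.

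First, I use the preceding theorem on $(Cliq(X),\tau_p)$. If $(Cliq(X),\tau_p)$ is completely metrizable, that theorem gives that $X$ is countable and $I(X)$ is dense in $X$. When $I(X)$ is dense, every real-valued function on $X$ is cliquish, as observed in the proof of that theorem. Hence $Cliq(X)=\Bbb R^X$ as sets. Since $\tau_p$ on $Cliq(X)$ is just the subspace product topology, the two spaces $(Cliq(X),\tau_p)$ and $\Bbb R^X$ coincide as topological spaces.

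Next, because $X$ is countable, $\Bbb R^X$ is a countable (or finite) product of copies of $\Bbb R$. Such a product is Polish: it is completely metrizable via the usual metric $\sum 2^{-n}\min\{1,|f(x_n)-g(x_n)|\}$, and it is separable because functions with rational values at finitely many coordinates and $0$ elsewhere form a countable dense set. Therefore $(Cliq(X),\tau_p)$ is Polish.

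There is no real obstacle here. The only point needing care is to cite the previous theorem in the direction \emph{completely metrizable $\Rightarrow$ $X$ countable and $I(X)$ dense}. That direction relies on Proposition \ref{q} for countability and on the Baire category argument for density of $I(X)$, and both are already established. As an alternative check, one could argue as in the earlier corollary for $Q(X)$: metrizability of $Cliq(X)$ forces metrizability of its subspace $Q(X)$, so $X$ is countable by Proposition \ref{q}, and separability then follows.
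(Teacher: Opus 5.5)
Your argument is correct, but your main route differs from the paper's. The paper never uses the density of $I(X)$. Since $Q(X)\subset Cliq(X)$, metrizability of $(Cliq(X),\tau_p)$ gives metrizability of $(Q(X),\tau_p)$, hence $X$ is countable by Proposition~\ref{q}. Then $\Bbb R^X$ is second countable, and second countability passes to the subspace $Cliq(X)$; together with complete metrizability this gives Polishness. That is exactly what you sketch as your ``alternative check''.

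Your main route instead invokes the full preceding theorem to identify $(Cliq(X),\tau_p)$ with $\Bbb R^X$. The density-of-$I(X)$ part of that theorem rests on the $G_\delta$/Baire category argument, and on the fact that a difference of cliquish functions is cliquish. This buys a sharper conclusion: the space is literally $\Bbb R^X$ with $X$ countable. The paper's route is more economical and shows more generally that any metrizable $(Cliq(X),\tau_p)$ is already second countable, so completeness plays no role in getting separability.
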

\begin{proof} If $(Cliq(X),\tau_p)$  is Polish, then it is completely metrizable. Suppose that $(Cliq(X),\tau_p)$  is completely metrizable. The metrizability of $(Cliq(X),\tau_p)$ implies the metrizability of $(Q(X),\tau_p)$ and it implies that $X$ is countable by Proposition \ref{q}. If $X$ is countable, then $\Bbb R^X$ is second countable, thus also $(Cliq(X),\tau_p)$  is second countable, since $Cliq(X) \subset \Bbb R^X$.
\end{proof}

\begin{proposition} Let $X$ be a Hausdorff topological space.  If $I(X)$ is dense in $X$, $(Cliq(X),\tau_p)$ is strong Choquet space.
\end{proposition}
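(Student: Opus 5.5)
The plan is to show that $Cliq(X)=\Bbb R^X$ and then use that $\Bbb R^X$ with the product topology is strong Choquet, in the same way as the proof of Proposition \ref{discrete}.

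\emph{Step 1: every real function on $X$ is cliquish.} Let $f\in\Bbb R^X$, $x_0\in X$, $\varepsilon>0$, and let $U$ be an open neighbourhood of $x_0$. Since $I(X)$ is dense, there is $y\in U\cap I(X)$. The singleton $V=\{y\}$ is a nonempty open subset of $U$. For all $x_1,x_2\in V$ we have $x_1=x_2=y$, so $|f(x_1)-f(x_2)|=0<\varepsilon$. Hence $f$ is cliquish at $x_0$, and since $x_0$ was arbitrary, $Cliq(X)=\Bbb R^X$. This is the same observation already used in the preceding proposition on completeness of $(Cliq(X),\frak U_p)$.

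\emph{Step 2: $\Bbb R^X$ is strong Choquet.} One option is to quote the standard fact that an arbitrary product of strong Choquet spaces is strong Choquet, together with the fact that $\Bbb R$ is completely metrizable and therefore strong Choquet. To stay self-contained, I would instead write down $\alpha$'s strategy directly, mirroring the strategy in Theorem \ref{choquet} with the quasicontinuity step removed.

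Suppose $\beta$ plays $(f_n,U_n)$ with $U_n=W(f_n,B_n,\varepsilon_n)$ a basic neighbourhood. Then $\alpha$ sets $\delta_n=\min\{\delta_{n-1}/2,\varepsilon_n\}$ and $A_n=A_{n-1}\cup B_n$, and plays $V_n=W(f_n,A_n,\delta_n/2)$. Two checks are needed.

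First, $V_n\subset U_n$. This holds because $B_n\subset A_n$ and $\delta_n/2<\varepsilon_n$.

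Second, the intersection $\bigcap_n V_n$ is nonempty. Put $L=\bigcup_n A_n$. For each $x\in L$ the sequence $(f_n(x))$ is Cauchy, because $f_{n+1}\in V_n$ gives $|f_{n+1}(x)-f_n(x)|<\delta_n/2$ and $\delta_n\le \delta_0 2^{-n}$. Let $f(x)$ be its limit. The tail estimate $|f(x)-f_n(x)|\le\sum_{k\ge n}\delta_k/2\le\delta_n$ shows $f$ lies in the closure of each $V_n$ restricted to $L$. To get the strict inequality that places $f$ inside $V_n$ itself, I shrink the radius: let $\alpha$ play radius $\delta_n/4$ rather than $\delta_n/2$, so that $|f(x)-f_n(x)|\le\delta_n/2<\delta_n$. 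Finally, extend $f$ arbitrarily outside $L$, for instance by $0$. The resulting function lies in $\bigcap_n V_n$, and by Step 1 it lies in $Cliq(X)$.

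\emph{Expected obstacle.} The only real difficulty is the bookkeeping of radii needed to keep the inequalities strict in the tail estimate. There is no quasicontinuity-type constraint on the limit, because Step 1 removes any restriction on the values taken off the isolated points.
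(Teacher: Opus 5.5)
Your Step 1 and the first option in Step 2 (quote that $\Bbb R$ is strong Choquet and that products of strong Choquet spaces are strong Choquet) are exactly the paper's proof. Density of $I(X)$ makes every function cliquish, so $(Cliq(X),\tau_p)=\Bbb R^X$, which is strong Choquet. The self-contained verification you say you would actually write, however, contains a step that fails as stated.

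\textbf{The radius bookkeeping is wrong.} Your tail estimate gives $|f(x)-f_n(x)|\le\delta_n$ on $A_n$. But $V_n$ has radius $\delta_n/2$, so this places $f$ neither in $V_n$ nor in $\overline{V_n}$. Shrinking the radius to $\delta_n/4$ does not help. The tail bound becomes $\delta_n/2$, which is again twice the new radius: a geometric tail $\sum_{k\ge n}c\,\delta_k\le 2c\,\delta_n$ always exceeds the radius $c\,\delta_n$. Writing "$\delta_n/2<\delta_n$" compares your bound with $\delta_n$, which is not the radius of $V_n$.

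\textbf{The fix is to use the game rule $U_{n+1}\subset V_n$.} Either of the following works with your original radius $\delta_n/2$.

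\emph{Via $U_n$.} The estimate is actually strict: $|f(x)-f_n(x)|<\delta_n$, because already $|f_{n+1}(x)-f_n(x)|<\delta_n/2$. Since $\delta_n\le\varepsilon_n$ and $B_n\subset A_n$, this gives $f\in U_n$ for every $n$. Hence $f\in U_{n+1}\subset V_n$ for every $n$.

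\emph{Via closures.} Since $f_k\in V_n$ for all $k\ge n$, passing to the limit gives $f\in\overline{V_n}$. Moreover $\overline{V_{n+1}}\subset U_{n+1}\subset V_n$, because the radius $\delta_{n+1}/2$ is strictly below $\varepsilon_{n+1}$ and $B_{n+1}\subset A_{n+1}$.

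With either repair your direct argument is complete, and it works for uncountable products $\Bbb R^X$ as well.
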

\begin{proof} If $I(X)$ is dense in $X$, then every function from $X$ to $\Bbb R$ is cliquish. Thus $(Cliq(X),\tau_p) = \Bbb R^X$ is strong Choquet.

\end{proof}

\bigskip


Let $X$ and $Y$ be topological spaces. Denote  by $Q(X,Y)$ the set of all quasicontinuous functions from $X$.

\begin{lemma} \label{sum}
If $X= \bigoplus_{i\in I} X_i$, a topological sum of  topological spaces $X_i$, $i \in I$,  and $(Y,d)$ is a metric space, then $(Q(X,Y),\tau_{p})$ is homeomorphic to the product $\prod_{i \in I} (Q(X_i,Y),\tau_{p})$.
\end{lemma}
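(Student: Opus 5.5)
The plan is to use the restriction map. Define $\Phi: Q(X,Y) \to \prod_{i\in I} Q(X_i,Y)$ by $\Phi(f) = (f|X_i)_{i\in I}$, and check in turn that it is well defined, bijective, and a homeomorphism for $\tau_p$.

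\emph{Well defined.} Each $X_i$ is open in $X$, so restricting a quasicontinuous $f$ to $X_i$ gives a quasicontinuous function. Indeed, let $x\in X_i$, let $V$ be an open set with $f(x)\in V$, and let $U\subset X_i$ be an open neighbourhood of $x$ in $X_i$. Then $U$ is also open in $X$, so quasicontinuity of $f$ at $x$ gives a nonempty open $W\subset U$ with $f(W)\subset V$. This $W$ is open in $X_i$ as well.

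\emph{Bijective.} Injectivity holds because the $X_i$ cover $X$. For surjectivity, take a family $(f_i)_{i\in I}$ with each $f_i$ quasicontinuous on $X_i$, and glue it to $f: X\to Y$ by $f|X_i = f_i$. To see that $f$ is quasicontinuous, fix $x\in X_i$, an open $U\ni x$ in $X$ and an open $V\ni f(x)$. Apply quasicontinuity of $f_i$ at $x$ to the set $U\cap X_i$, which is an open neighbourhood of $x$ in $X_i$. This gives a nonempty $W\subset U\cap X_i$, open in $X_i$ and hence open in $X$, with $f(W)=f_i(W)\subset V$. The key fact in both directions is that every $X_i$ is clopen in the sum, so the open sets of $X_i$ are exactly the open sets of $X$ contained in $X_i$.

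\emph{Homeomorphism.} Both sides carry product topologies: $\tau_p$ on $Q(X,Y)$ is the subspace topology from $Y^X$, and the right side is the product of subspaces of the $Y^{X_i}$. The canonical bijection $Y^X \cong \prod_{i\in I} Y^{X_i}$, $f\mapsto (f|X_i)_{i\in I}$, is a homeomorphism of product spaces, since both are products of copies of $Y$ indexed by the same set $X=\bigsqcup_i X_i$. Restricting it to the subspace $Q(X,Y)$ gives $\Phi$, whose image is $\prod_{i\in I} Q(X_i,Y)$ by the bijectivity step. A product of subspaces carries the subspace topology of the product, so $\Phi$ is a homeomorphism.

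For a direct check without this identification, use basic neighbourhoods. A $\tau_p$-basic neighbourhood of $f$ is given by a finite set $A\subset X$ and $\varepsilon>0$. The finite set $A$ meets only finitely many $X_i$, so this neighbourhood corresponds exactly to a basic product neighbourhood that constrains only finitely many coordinates, each by a basic $\tau_p$-neighbourhood determined by the finite set $A\cap X_i$. This gives continuity of $\Phi$ and of $\Phi^{-1}$.

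The only point needing care is surjectivity, namely that gluing quasicontinuous pieces stays quasicontinuous; this relies on the $X_i$ being open. The topological part is purely formal.
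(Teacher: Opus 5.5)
Your proof is correct and follows the paper's route. You use the same restriction map $(f|X_i)_{i\in I}$, rely on each summand being open both for well-definedness and for the gluing step (which you verify explicitly where the paper only says ``of course''), and for the topological part you use the fact that a finite set meets only finitely many $X_i$. Identifying $\Phi$ as the restriction of the canonical homeomorphism $Y^X\cong\prod_{i\in I}Y^{X_i}$ is just a cleaner packaging of the paper's net-by-net check that $\Psi$ and $\Psi^{-1}$ are continuous.
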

\begin{proof} Define the mapping $\Psi: (Q(X,Y),\tau_{p}) \to \prod_{i \in I} (Q(X_i,Y),\tau_{p})$ as follows: $\Psi(f) = (f|X_i)_{i \in I}$ for $f \in Q(X,Y)$. For every $i \in I$ the function $f|X_i \in Q(X_i,Y)$, since $X_i$ is an open set in $X$. It is easy to verify that $\Psi$ is a bijection. If $(f_i)_{i \in I} \in \prod_{i \in I} Q(X_i,Y)$,
then $\Psi^{-1}((f_i)_{i \in I})$ is the following function: $\Psi^{-1}((f_i)_{i \in I})(x) = f_i(x)$ if $x \in X_i$. Of course, $\Psi^{-1}((f_i)_{i \in I})$ is a quasicontinuous function from $X$ to $Y$. Now we prove that $\Psi$ is homeomorphisms.
Suppose that the net $\{f_\sigma:\sigma \in \Sigma\}$ converges to $f \in (Q(X,Y),\tau_{p})$. Let $O $ be an open neighbourhood of $\Psi(f)$ in  $\prod_{i \in I} (Q(X_i,Y),\tau_{p})$. There are a finite subset $J \subset I$, finite sets $K_j \subset X_j$, $j \in J$ and positive $\epsilon_j$, $j \in J$ such that

\bigskip

\centerline{$\prod_{j \in j} W(f|X_j,K_j,\epsilon_j) \times \prod_{i \in I \setminus J} Q(X_i,Y)   \subset O.$}

\bigskip

Put $K = \cup_{j \in J} K_j$ and $\epsilon = min\{\epsilon_j: j \in J\}$. There is $\sigma_0 \in \Sigma$ such that for every $\sigma \ge \sigma_0$, $f_\sigma \in W(f,K,\epsilon)$. Thus for every $\sigma \ge \sigma_0$, $\Psi(f_\sigma) \in O$.

Suppose now that the net $\{F_\sigma:\sigma \in \Sigma\}$ converges to $F$ in $\prod_{i \in I} (Q(X_i,Y),\tau_{p})$. Then $F = (f_i)_{i \in I}$ and $F_\sigma = (f_{\sigma,i})_{i \in I}$ for every $\sigma \in \Sigma$. Let $G$ be an open neighbourhood of $\Psi^{-1}((f_i)_{i \in I})$. There are a finite
 set $K \subset X$ and $\epsilon > 0$ such that

\bigskip
\centerline{$W(\Psi^{-1}((f_i)_{i \in I}),K,\epsilon) \subset G.$}
\bigskip
There is a finite subset $J $ of $I$ such that $K = \cup_{j \in J} K \cap X_j$. There is $\sigma_0 \in \Sigma$ such that

\bigskip
\centerline{$(f_{\sigma,i})_{i \in I} \in \prod_{j \in j} W(f|X_j,K \cap X_j,\epsilon) \times \prod_{i \in I \setminus J} Q(X_i,Y)$ for every $\sigma \ge \sigma_0.$}
\bigskip

Thus for every $\sigma \ge \sigma_0$ $\Psi^{-1}((f_{\sigma,i})_{i \in I}) \in W(\Psi^{-1}((f_i)_{i \in I}),K,\epsilon) \subset G.$

\end{proof}

\bigskip

\begin{corollary} Let $X= \bigoplus_{i\in I} X_i$, be a topological sum of  topological spaces $X_i$, $i \in I$, where every $X_i$ is either a discrete topological space or a first countable Hausdorff topological space with $I(X_i)$ dense and with  countably many non isolated points in $X_i$. Then $(Q(X),\tau_p)$ is strong Choquet.
\end{corollary}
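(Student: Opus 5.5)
The plan is to reduce the corollary to the summands, using Lemma \ref{sum} with $Y=\Bbb R$. That lemma gives a homeomorphism
$$(Q(X),\tau_p)\cong \prod_{i\in I}(Q(X_i),\tau_p).$$
Strong Choquetness is a topological property, so it suffices to show two things: that each factor $(Q(X_i),\tau_p)$ is strong Choquet, and that an arbitrary product of strong Choquet spaces is strong Choquet.

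For the factors we split into the two allowed cases. If $X_i$ is discrete, Proposition \ref{discrete} applies. Otherwise $X_i$ is a first countable Hausdorff space in which $I(X_i)$ is dense and which has only countably many non isolated points. Theorem \ref{ja} is stated for exactly this situation, so it applies verbatim. (If $X_i$ has no non isolated points at all, it is discrete and we are in the first case anyway.)

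The main step is the product statement, which is standard (cf. \cite{Kech}, where it is stated for countable products, though the argument works for arbitrary ones).
\begin{itemize}
\item For each $i\in I$, fix a winning strategy $s_i$ for $\alpha$ in $G^s(Q(X_i))$.
\item Let $\beta$ play $(F_n,U_n)$ in the product, where we may assume $U_n$ is a basic open set. Such a set is a product of open sets $U_{n,i}$ with $U_{n,i}$ equal to the whole factor for all $i$ outside a finite set $J_n$.
\item $\alpha$ runs a separate play of $G^s(Q(X_i))$ in each coordinate. Coordinate $i$ is activated at the first stage $n$ with $i\in J_n$. From that stage on, $\alpha$ feeds $(F_n(i),U_{n,i})$ to $s_i$. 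At every stage, $\alpha$ answers with the product of the sets $s_i$ returns in the activated coordinates, and whole factors elsewhere.
\item This answer is basic open and contains $F_n$. It also lies inside $U_n$: in every coordinate where $U_n$ is nontrivial, the coordinate is activated, so the answer there is contained in $U_{n,i}$.
\end{itemize}
Two points need checking so that each coordinate run is a legal play of $G^s(Q(X_i))$. In later rounds $U_{n+1,i}\subset V_{n,i}$ holds because $U_{n+1}\subset V_n$. When coordinate $i$ is first activated, the preceding $\alpha$-moves there were the whole space, so there is no constraint to violate.

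After infinitely many rounds, each activated coordinate has produced a full run according to $s_i$. Hence $\bigcap_n V_{n,i}\neq\emptyset$, and we pick a point $g_i$ from it. For non-activated coordinates we pick any $g_i$. The resulting $g=(g_i)_{i\in I}$ lies in $\bigcap_n V_n$, so $\alpha$ wins.

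The only delicate point is the bookkeeping for coordinates that are activated late. I would handle it by treating a coordinate's run as starting at its activation time, noting that strategies are defined for runs starting with any legal first move of $\beta$.
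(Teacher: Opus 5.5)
Your proposal is correct and matches the paper's proof: Lemma \ref{sum} reduces the problem to the factors, Proposition \ref{discrete} and Theorem \ref{ja} show each factor is strong Choquet, and productivity of strong Choquetness finishes the argument. The only difference is that you spell out the coordinate-activation argument for arbitrary, not just countable, products, whereas the paper simply attributes this step to \cite{Kech}.
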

\begin{proof} By Proposition \ref{discrete} and Theorem \ref{ja} every $(Q(X_i),\tau_p)$ is strong Choquet. By Lemma \ref{sum} $(Q(X),\tau_{p})$ is homeomorphic to the product $\prod_{i \in I} (Q(X_i),\tau_{p})$. By \cite{Kech} any product of strong Choquet spaces is strong Choquet.
\end{proof}

\end{document}